\begin{document}

\title{On $1$-absorbing prime submodules}


\author{Emel Aslankarayigit Ugurlu        
}


\institute{E. Aslankarayigit Ugurlu \at
             Department of Mathematics, Marmara University, Istanbul, Turkey. \\ \email{emel.aslankarayigit@marmara.edu.tr}           
     }

\date{Received: 02 July 2020 / Accepted: date}

\maketitle

\begin{abstract}
In this study, we aim to introduce the concept of \textit{1-absorbing prime
	submodule} of an unital module over a commutative ring with non-zero
identity. Let $M\ $be an $R$-module and $N$ be a proper submodule of $M$.
For all non-unit elements $a,b\in R$ and $m\in M$ if $abm\in N,$ either $%
ab\in(N:M)$ or $m\in N,$ then $N$ is called 1-absorbing prime submodule of $%
M.$ We show that the new concept is a generalization of prime submodules at
the same time it is a kind of special 2-absorbing submodule. In addition to
some properties of a 1-absorbing prime submodule, we obtain a
characterization of it in a multiplication module.

\keywords{1-absorbing prime ideal \and 1-absorbing prime submodule \and prime submodule}
\subclass{16D10 \and 16D80  }
\end{abstract}

\section{Introduction}

\label{Sec:1}

In this article, we focus only on commutative rings with non-zero identity
and non-zero unital left modules. Let $R$ always denote such a ring and let $M\ $%
denote such an $R$-module. The concept of prime ideals and its
generalizations have a significant place in commutative algebra since they
are used in understanding the structure of rings. Recall that a proper ideal 
$P$ of $R$ is said to be \textit{prime ideal} if $ab\in P$ implies $a\in P$
or $b\in P$, \cite{atiyah}. Several authors have extended the notion of
prime ideals to modules, see, for example, \cite{JD1978,L,MM}. A proper
submodule $N$ of a module $M$ over a commutative ring $R$ is said to be 
\textit{prime} \textit{submodule} if whenever $rm\in N$ for some $r\in
R,m\in M$, then $m\in N$ or $rM\subseteq N,$ \cite{RA2003}.

$Id(R)$ and $S(M)$ denote the lattice of all ideals of $R$ and the lattice
of all submodules of $M,$ respectively. The \textit{radical of} $I,$ denoted
by $\sqrt{I},$ is defined as the intersection of all prime ideals contain $I.
$ Note that we have the equality $\sqrt{I}=\{r\in R$ $|$ $r^{k}\in I$ for
some $k\in%
\mathbb{N}
\},$ see \cite{atiyah}. For any $a\in R,$ the principal ideal generated by $a
$ is denoted by $(a).$ All unit elements of $R$
is denoted by $U(R).$ For any element $x\in M,$ the set \ $%
<m>=Rm=\{rm:\forall r\in R\}$ is the cyclic submodule of $M.$ If $M=<X>,$
we say that $M$ is a finitely generated $R$-module for any finite subset $X$
of $M.$ Now we define the residue of $N$ by $M$. If $N$ is a
submodule of an $R$-module $M,$ the ideal $\{r\in R:rM\subseteq N\}$ is
called the \textit{residue} of $N$ by $M$ and it is denoted by $(N:_{R}M).$
If $R$ is clear, it is written by only $(N:M).$ In particular, $(0_{M}:_{R}M)
$ is called the \textit{annihilator of} $M$ and denoted by $Ann(M),$ see 
\cite{smith}. If the annihilator of $M$ equal to $0_{R},$ then $M$ is called a 
\textit{faithful module}. For an element $m\in M,$ the \textit{%
	annihilator of} $m$ is defined as $Ann(m):=\{r\in R:rM\subseteq N\}$ and it
is an ideal of $R.$ For a proper submodule $N$ of $M$, the \textit{radical of} $N,$ denoted by $rad(N),$ is defined to be the intersection of all prime
submodules of $M$ containing $N.$ If there is no prime submodule containing $%
N$, then $rad(N)=M,$ see \cite{smith}.

In 2007, the notion of 2-absorbing ideal, which is a generalization of prime
ideal, was introduced by Badawi as the following: a proper ideal $I$ of $R$
is called a \textit{2-absorbing ideal} of $R$ if whenever $a,b,c\in R$ and $%
abc\in I$, then $ab\in I$ or $ac\in I$ or $bc\in I$, see \cite{Ba}. Then in
2011, A. Y. Darani and \ F. Soheilnia defined the concept of \ 2-absorbing
submodule as following: $N$ is said to be a \textit{2-absorbing submodule}
of $M$ if whenever $a,b\in R$ and $m\in M$ with $abm\in N$ then $ab\in(N:M)$
or $am\in N$ or $bm\in N,$ see \cite{darani}. Actually, the concept of 2-absorbing submodule is a generalization of prime submodules.

Recently, Yassine et al. defined a new class of ideals, which is an
intermediate class of ideals between prime ideals and 2-absorbing ideals. A
proper ideal $I$ of $R$ is said to be a \textit{1-absorbing prime ideal} if
for each non-units $a,b,c\in R$ with $abc\in I$, then either $ab\in I$ or $%
c\in I$, see \cite{yassine}. Note that every prime ideal is a 1-absorbing
prime and every 1-absorbing prime ideal is a 2-absorbing ideal. Thus we have
a chain: prime ideals $\Rightarrow$ 1-absorbing ideals $\Rightarrow$ 2
absorbing ideals. On the other hand, we have a second chain: prime
submodules $\Rightarrow$ 2 absorbing submodules. Thus we realize that there
is a missing part in the second chain, which is between prime submodules and
2-absorbing submodules. Then we define the missing part of the chain as 
\textit{1-absorbing prime submodules.}

In this paper, after introducing the notion of \textit{1-absorbing prime submodule} of an unital left module over a commutative ring with non-zero identity, we examine the properties of the new class. For all non-unit elements $a,b\in R$ and $m\in M,$ if $%
abm\in N,$ either $ab\in(N:M)$ or $m\in N,$ then $N$ is called \textit{%
	1-absorbing prime\textbf{\ }submodule} of $M,$ see Definition \ref{def 1}.
Firstly, we investigate in Proposition \ref{mainpro}, the relation between
1-absorbing prime submodules and other classical submodules such as prime
submodules, 2-absorbing submodules. We prove that every prime submodule is a
1-absorbing submodule, but the converse is not true: To see this, consider
the cyclic submodule of $%
\mathbb{Z}
_{4}-$module $%
\mathbb{Z}
_{4}[X]$ generated by $X,$ that is, $<X>$. Indeed, it is 1-absorbing
submodule, but is not a prime submodule of $%
\mathbb{Z}
_{4}-$module $%
\mathbb{Z}
_{4}[X],$ see Example \ref{ex2}.  Furthermore, we show that every 1-absorbing prime submodule is a
2-absorbing submodule. However it is not true that every 2-absorbing submodule
is a 1-absorbing prime submodule. For instance, consider the cyclic
submodule of $%
\mathbb{Z}
-$module $%
\mathbb{Z}
_{30}$ generated by $\overline{6},$ it is a 2-absorbing submodule but not
a 1-absorbing prime submodule, see Example \ref{ex3}. Actually, by the help of Proposition \ref{mainpro}, the second chain is completed. For the completed picture of these algebraic structures, see Figure 1. Among other
results in this paper, in Section 2, we proved that $N$ is a 1-absorbing
prime submodule of $M$ $\Leftrightarrow$ for any two proper ideals $I,J$ and
for a proper submodule $K$ of $M,$ $IJK\subseteq N$ implies either $%
IJ\subseteq(N:M)$ or $K\subseteq N$, see Theorem \ref{theoremN}. In Corollary \ref{corQ}, after Theorem \ref{theoremQ},  we conclude that if $R$ is a quasi local ring the concepts of 1-absorbing prime submodule and prime submodule are equivalent. After we
investigate the behavior of 1-absorbing prime submodules under
homomorphisms, in Corollary \ref{corbolum}, we prove that if $N$ is a
1-absorbing prime submodule of $M$, then $N/K$ is a 1-absorbing prime
submodule of $M/K$. In Section 3, we examine 1-absorbing prime submodules of
a multiplication module. Under special conditions, we show that for a 1-absorbing prime ideal $I$ of $R,$ $abm\in
IM$ implies $ab\in I$ or $m\in IM$ for all
non-units $a,b\in R$ and $m\in M,$ see Theorem \ref{main}. By using this
result in Theorem \ref{char}, we obtain a characterization of 1-absorbing prime submodules
in multiplication modules.


\section{Properties of 1-Absorbing Prime Submodules of Modules}

\begin{definition}
	\label{def 1}Let $M\ $be an $R$-module and $N$ be a proper submodule of $M$.
	For all non-units element $a,b\in R$ and $m\in M,$ if $abm\in N,$ either
	$ab\in(N:M)$ or $m\in N,$ then $N$ is called \textbf{1-absorbing prime
		submodule} of $M.$
\end{definition}

It is clear that if $I$ is a 1-absorbing prime ideal of $R,$ it is a
1-absorbing prime submodule of $R$-module $R.$

\begin{proposition}
	\label{mainpro}Prime submodules $\Rightarrow$ 1-absorbing prime submodules
	$\Rightarrow$ 2-absorbing submodules.
\end{proposition}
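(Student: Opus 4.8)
The plan is to prove the two implications separately, each by a direct verification from the definitions.

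\medskip

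\noindent\textbf{Prime $\Rightarrow$ 1-absorbing prime.} Suppose $N$ is a prime submodule of $M$, and let $a,b\in R$ be non-units and $m\in M$ with $abm\in N$. Write $abm=(ab)m$ and apply the definition of prime submodule with the ring element $r=ab$: we get either $m\in N$ or $(ab)M\subseteq N$, i.e. $ab\in(N:M)$. That is exactly the conclusion required in Definition \ref{def 1}, so $N$ is 1-absorbing prime. Note that here the non-unit hypothesis on $a,b$ is not even needed; the implication holds unconditionally.

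\medskip

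\noindent\textbf{1-absorbing prime $\Rightarrow$ 2-absorbing.} Suppose $N$ is 1-absorbing prime, and let $a,b\in R$ and $m\in M$ with $abm\in N$; I must show $ab\in(N:M)$ or $am\in N$ or $bm\in N$. The idea is a case split on whether $a$ and $b$ are units. If both $a$ and $b$ are non-units, Definition \ref{def 1} applies directly and gives $ab\in(N:M)$ or $m\in N$; in the latter case $am\in N$ as well, so we are done. If $a$ is a unit, then $b m = a^{-1}(abm)\in N$ (since $N$ is a submodule and $abm\in N$), so $bm\in N$; symmetrically if $b$ is a unit then $am\in N$. In every case one of the three required memberships holds, so $N$ is a 2-absorbing submodule.

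\medskip

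I do not expect a genuine obstacle here — both implications are immediate from the definitions once the correct case analysis (unit vs.\ non-unit) is set up, which is the only subtle point: the definition of 1-absorbing prime submodule only constrains the behaviour on non-unit scalars, so when deriving the 2-absorbing property one must dispose of the cases where $a$ or $b$ is a unit by absorbing the unit's inverse into the submodule. The properness of $N$ is inherited in both implications since $N$ is proper by hypothesis in all three notions.
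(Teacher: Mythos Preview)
Your proof is correct and follows essentially the same approach as the paper: a direct verification from the definitions, with the second implication handled by the same case split on whether $a$ and $b$ are units. Your write-up is in fact slightly more careful (explicitly noting that $m\in N$ gives $am\in N$, and writing $bm=a^{-1}(abm)$), but the argument is the same.
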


\begin{proof}
	Let $N$ be a prime submodule of $M.$ Take non-unit elements $a,b\in R$ and  $%
	m\in M$ such that $abm\in N.$ Since $N$ is a prime submodule, $ab\in(N:M)$
	or  $m\in N,$ as desired. Suppose $N$ is a 1-absorbing prime submodule of $M.
	$  Take any $a,b\in R$ and $m\in M$ such that $abm\in N.$ We must obtain
	that  $ab\in(N:M)$ or $am\in N$ or $bm\in N$. If $a,b$ are non-units, we
	have  $ab\in(N:M)$ or $m\in N,$ it is done. Without loss generality, assume $%
	a$ is  unit. Then $abm\in N$ implies $bm\in N,$ as desired.
\end{proof}

\begin{example}
	Let $(R,\mathfrak{m})$ be a local ring with $\mathfrak{m}^{2}=(0_{R})$ and $M$
	be a $R$-module. Then every proper submodule is a 1-absorbing prime submodule
	of $M.$ To see this, choose non-units $a,b\in R$ and $m\in M$ such that
	$abm\in N.$ Since $ab\in\mathfrak{m}^{2}=(0_{R}),$ we have $ab\in(N:M),$ which
	implies $N$ is a 1-absorbing prime submodule of $M.$
\end{example}

\begin{example}
	\label{ex2} \textbf{(1-absorbing prime submodule that is not prime)}Consider $%
	\mathbb{Z}
	_{4}-$module $%
	\mathbb{Z}
	_{4}[X]$ and the submodule $N=<X>$. By previous example, $N$ is a 1-absorbing
	prime submodule. But $N$ is not a prime submodule of $%
	\mathbb{Z}
	_{4}[X].$
\end{example}

\begin{example}
	\label{ex3}	\textbf{(2-absorbing submodule that is not 1-absorbing prime)} Let consider $%
	\mathbb{Z}
	-$module $%
	\mathbb{Z}
	_{30}.$ Suppose that $N$ is the cyclic submodule of $%
	\mathbb{Z}
	-$module $%
	\mathbb{Z}
	_{30}$ generated by $\overline{6},$ that is, $N=<\overline{6}>$. It is clear
	that $<\overline{6}>$ is a 2-absorbing submodule of $%
	\mathbb{Z}
	-$module $%
	\mathbb{Z}
	_{30},$ but it is not a 1-absorbing prime submodule of $%
	\mathbb{Z}
	_{30}.$ Indeed, $2\cdot2\cdot\overline{3}\in<\overline{6}>$ but $\overline
	{4}\notin(N:%
	\mathbb{Z}
	_{30})$ and $3\notin<\overline{6}>.$
\end{example}

\begin{figure}[h]
	\includegraphics[width=1.1\textwidth, ]{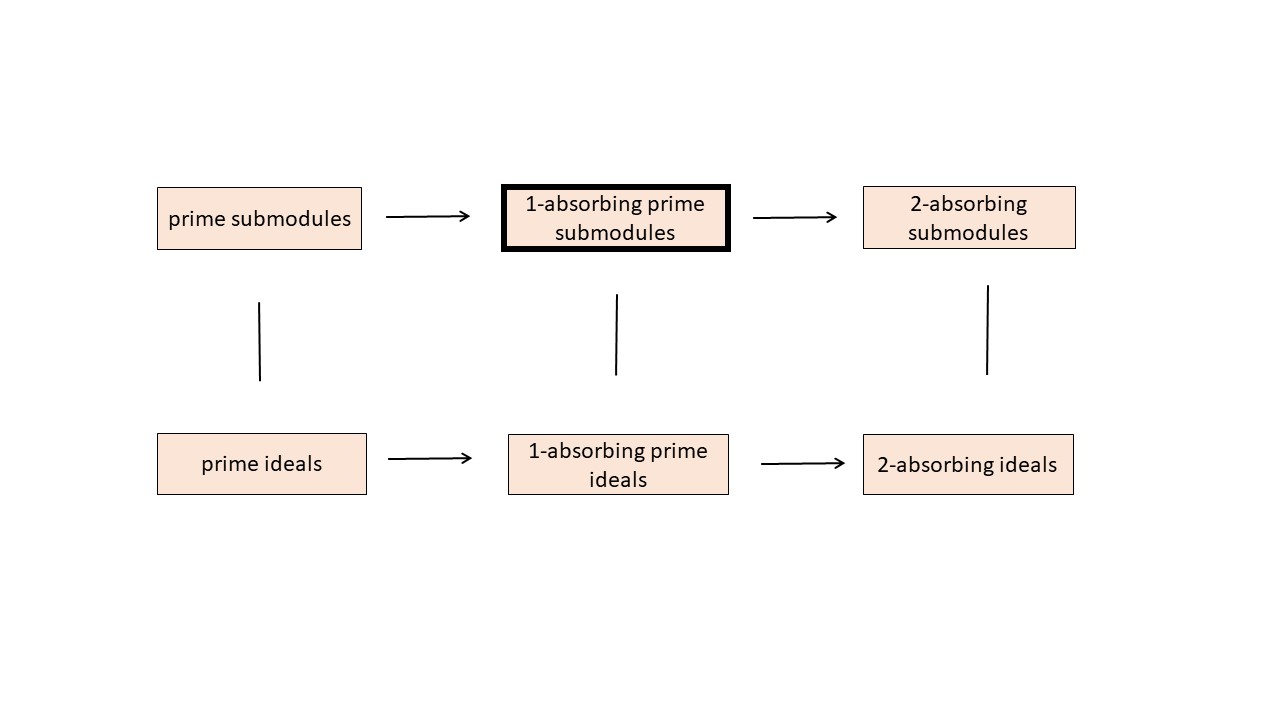}
	\caption{1-absorbing prime submodules (ideals) vs other classical submodules
		(ideals)}
	\label{fig:figure1}
\end{figure}

\begin{proposition}
	\label{pro1} If $N$ is a 1-absorbing prime submodule of $M,$
	
	\begin{enumerate}
		\item $(N:M)$ is a 1-absorbing prime ideal of $R.$
		
		\item $(N:m)$ is a 1-absorbing prime ideal of $R$, for every $m\in M\backslash
		N.$
	\end{enumerate}
\end{proposition}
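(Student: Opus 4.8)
The plan is to prove both parts directly from Definition~\ref{def 1}, exploiting the fact that a 1-absorbing prime submodule $N$ forces, for non-units $a,b$, the dichotomy ``$ab \in (N:M)$ or $m \in N$.'' The only subtlety is that $(N:M)$ and $(N:m)$ are \emph{ideals}, so the 1-absorbing prime \emph{ideal} condition involves three non-unit elements $a,b,c$ of $R$, and I must convert the ``$c \in$ ideal'' conclusion back into a statement about $M$.

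\textbf{Part (1).} Let $a,b,c$ be non-units of $R$ with $abc \in (N:M)$. Then $(ab)(c)M \subseteq N$, i.e. for every $x \in M$ we have $ab(cx) \in N$. Since $a,b$ are non-units, applying the 1-absorbing prime submodule property with $m := cx$ gives $ab \in (N:M)$ or $cx \in N$. If $ab \in (N:M)$ we are done (this is the desired first alternative $ab \in (N:M)$). Otherwise $cx \in N$ for this particular $x$; but $x$ was arbitrary, so — provided $ab \notin (N:M)$ — we get $cx \in N$ for \emph{all} $x \in M$, i.e. $cM \subseteq N$, which means $c \in (N:M)$. Hence in all cases $ab \in (N:M)$ or $c \in (N:M)$, so $(N:M)$ is a 1-absorbing prime ideal of $R$ once we note it is proper (which holds because $N \neq M$).

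\textbf{Part (2).} Fix $m \in M \setminus N$ and let $a,b,c$ be non-units of $R$ with $abc \in (N:m)$, i.e. $abcm \in N$. Set $m' := cm$. Since $a,b$ are non-units and $ab\,m' = abcm \in N$, the 1-absorbing prime submodule property yields $ab \in (N:M) \subseteq (N:m)$ or $cm = m' \in N$, and the latter says exactly $c \in (N:m)$. So $(N:m)$ is a 1-absorbing prime ideal, provided it is proper; but $(N:m) = R$ would force $m = 1\cdot m \in N$, contradicting $m \notin N$.

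\textbf{Main obstacle.} There is no deep obstacle; the one point requiring care is the ``for all $x$'' quantifier manipulation in Part (1): I cannot conclude $cx \in N$ for every $x$ from a single application, so I must run the dichotomy once and observe that the alternative $ab \in (N:M)$ is independent of $x$, hence either it holds (done) or it fails for every $x$ and then $cx \in N$ for every $x$. One should also double-check that the 1-absorbing prime ideal definition in \cite{yassine} is applied with the right role for $ab$ versus $c$; the asymmetry (the product $ab$ lands in the ideal, the single factor $c$ lands in the ideal) matches exactly the asymmetry in Definition~\ref{def 1}, so the argument goes through cleanly.
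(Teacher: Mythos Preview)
Your proof is correct and follows essentially the same route as the paper's: both parts apply Definition~\ref{def 1} to the element $cm$ (respectively $cx$) to obtain the dichotomy $ab\in(N:M)$ or $cm\in N$, and then translate the second alternative into $c\in(N:M)$ (respectively $c\in(N:m)$). Your write-up is in fact slightly more careful than the paper's, since you explicitly handle the ``for all $x$'' quantifier in Part~(1) and verify that $(N:M)$ and $(N:m)$ are proper ideals, points the paper leaves implicit.
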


\begin{proof}
	Let $N$ be a 1-absorbing prime submodule of $M.$
	
	(1): Choose non-units $a,b,c\in R$ such that $abc\in(N:M).$ For all $m\in M$
	then $abcm\in N.$ By our hypothesis, $ab\in(N:M)$ or $cm\in N.$ This implies
	that $ab\in(N:M)$ or $c\in(N:M)$. Consequently, $(N:M)$ is 1-absorbing prime
	ideal of $R.$
	
	(2): Similar to (1).
\end{proof}

The converse of the item (1) of above Proposition is not true for general
cases. For this see the following example:

\begin{example}
	\textbf{ }Let $M=%
	\mathbb{Z}
	\times%
	\mathbb{Z}
	$ and $R=%
	\mathbb{Z}
	.$ Consider $N=<(3,0)>=%
	\mathbb{Z}
	(3,0).$ Then it is clear that $(N:M)=(0).$ Then $(N:M)$ is a prime ideal of $%
	\mathbb{Z}
	$, so it is a 1-absorbing prime ideal of $%
	\mathbb{Z}
	.$ But $%
	\mathbb{Z}
	(3,0)$ is not a 1-absorbing prime submodule of $%
	\mathbb{Z}
	\times%
	\mathbb{Z}
	$. Indeed, take non-units $3,2\in%
	\mathbb{Z}
	$ and $(1,0)\in%
	\mathbb{Z}
	\times%
	\mathbb{Z}
	,$ then $3\cdot2\cdot(1,0)\in%
	\mathbb{Z}
	(3,0).$ But $6\notin(N:M)=(0)$ and $(1,0)\notin%
	\mathbb{Z}
	(3,0).$ Thus $%
	\mathbb{Z}
	(3,0)$ is not a 1-absorbing prime submodule of $%
	\mathbb{Z}
	\times%
	\mathbb{Z}
	$.
\end{example}

\bigskip By the help of the next Lemma, we will prove that $N$ is a
1-absorbing prime submodule of $M\Leftrightarrow$ $IJK\subseteq N$ implies
that either $IJ\subseteq(N:M)$ or $K\subseteq N$ for any two proper ideals $%
I,J$ and a proper submodule $K$ of $M.$

\begin{lemma}
	\label{lemma}Let $M\ $be an $R$-module and $N$ be a 1-absorbing prime
	submodule of $M$. For a proper submodule $K$ of $M$ and for non-unit elements
	$a,b\in R$ if $abK\subseteq N,$ then $ab\in(N:M)$ or $K\subseteq N. $
\end{lemma}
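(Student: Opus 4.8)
The plan is to fix non-unit elements $a,b\in R$ and a proper submodule $K$ of $M$ with $abK\subseteq N$, assume that $K\not\subseteq N$, and deduce $ab\in(N:M)$. Since $K\not\subseteq N$, we may pick a fixed element $k_{0}\in K\setminus N$. For this $k_{0}$ we have $abk_{0}\in N$, and because $N$ is a $1$-absorbing prime submodule with $a,b$ non-units, this forces either $ab\in(N:M)$ — in which case we are done — or $k_{0}\in N$, which contradicts the choice of $k_{0}$. So the single-element case is immediate; the only real content is upgrading this to all of $K$ at once, and in fact the argument just given already does that, since the dichotomy produced by $k_{0}$ does not depend on $k_{0}$ once we land in the branch $ab\in(N:M)$.

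More carefully, I would phrase it as a clean case split. First I would dispose of the trivial possibility: if $ab\in(N:M)$ we are finished, so assume $ab\notin(N:M)$. Then I must show $K\subseteq N$. Take any $k\in K$; since $abK\subseteq N$ we get $abk\in N$, and applying Definition~\ref{def 1} to the non-units $a,b$ and the element $k$ yields $ab\in(N:M)$ or $k\in N$. The first alternative is excluded by our standing assumption, so $k\in N$. As $k\in K$ was arbitrary, $K\subseteq N$, which is exactly the second alternative of the conclusion.

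I do not expect any genuine obstacle here: the lemma is essentially a restatement of the defining property of a $1$-absorbing prime submodule, with the single element $m$ replaced by an arbitrary element of $K$ and the conclusion read "pointwise." The only thing to be careful about is the logical bookkeeping — one should not try to extract a uniform witness from $K$ before splitting into cases, but rather split on whether $ab\in(N:M)$ first and then run the universally-quantified argument inside the negative branch. This is the standard trick (used for prime, $2$-absorbing, etc.) for passing from an element-wise condition to a submodule-wise one, and it will be reused immediately afterwards in the proof of Theorem~\ref{theoremN} to handle the ideals $I$ and $J$ as well.
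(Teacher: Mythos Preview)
Your argument is correct and is essentially the same as the paper's: the paper also assumes $K\not\subseteq N$, picks $m\in K\setminus N$, applies the defining property to $abm\in N$, and rules out $m\in N$ to conclude $ab\in(N:M)$. Your ``version 2'' (assume $ab\notin(N:M)$ and show $K\subseteq N$ pointwise) is just the contrapositive of the same step and introduces no new idea.
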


\begin{proof}
	Suppose that $abK\subseteq N$ for non-unit elements $a,b\in R.$ Assume  $%
	K\nsubseteq N.$ Then there is an element $0\neq m\in K\backslash N.$ By 
	assumption, we have $abm\in N.$ Since $N$ is a 1-absorbing prime submodule, 
	either $ab\in(N:M)$ or $m\in N.$ The second one implies a contradiction, we 
	conclude $ab\in(N:M),$ it is done.
\end{proof}

\begin{theorem}
	\label{theoremN}Let $M\ $be an $R$-module and $N$ be a proper submodule of
	$M.$ Then the following statements are equivalent:
	
	\begin{enumerate}
		\item $N$ is a 1-absorbing prime submodule of $M.$
		
		\item If $IJK\subseteq N,$ then either $IJ\subseteq(N:M)$ or $K\subseteq N$
		for any two proper ideals $I,J$ and for a proper submodule $K$ of $M.$
	\end{enumerate}
\end{theorem}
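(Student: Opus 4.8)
The plan is to prove the two implications separately, using Lemma~\ref{lemma} as the engine for the forward direction and choosing suitable principal ideals and cyclic submodules for the reverse direction.

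\medskip

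\noindent\textbf{Plan for $(1)\Rightarrow(2)$.} Assume $N$ is a $1$-absorbing prime submodule, and suppose $IJK\subseteq N$ for proper ideals $I,J$ and a proper submodule $K$ with $K\nsubseteq N$; I must show $IJ\subseteq(N:M)$. The idea is to work element-by-element: for $a\in I$ and $b\in J$ we have $abK\subseteq N$. Since $I$ and $J$ are \emph{proper} ideals, every $a\in I$ and every $b\in J$ is a non-unit, so Lemma~\ref{lemma} applies and gives $ab\in(N:M)$ or $K\subseteq N$; the latter is excluded by hypothesis, so $ab\in(N:M)$ for all $a\in I$, $b\in J$. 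Then $IJ$, being generated by such products $ab$ and $(N:M)$ being an ideal, is contained in $(N:M)$, which is exactly what we want.

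\medskip

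\noindent\textbf{Plan for $(2)\Rightarrow(1)$.} Assume (2) and take non-unit elements $a,b\in R$ and $m\in M$ with $abm\in N$; I must show $ab\in(N:M)$ or $m\in N$. The natural choice is $I=(a)$, $J=(b)$, and $K=Rm=\langle m\rangle$. Since $a$ and $b$ are non-units, $(a)$ and $(b)$ are proper ideals of $R$, and $(a)(b)Rm = Rabm \subseteq N$ because $abm\in N$ and $N$ is a submodule. Now (2) yields $(a)(b)\subseteq(N:M)$ or $Rm\subseteq N$; the first gives $ab\in(N:M)$ and the second gives $m\in N$, completing the proof. One point to be careful about: (2) is stated for \emph{proper} submodules $K$, so I should handle the case $Rm=M$ separately — but if $Rm=M$ then $ab\in(abR:M)\subseteq$ \dots\ more cleanly, if $m\notin N$ we may assume $Rm\neq N$, and if $Rm=M$ then $abm\in N$ forces $abM=abRm\subseteq N$ hence $ab\in(N:M)$ directly, so this degenerate case causes no trouble.

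\medskip

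\noindent\textbf{Main obstacle.} The only real subtlety is the properness hypothesis on the ideals $I,J$ in statement (2), which is what makes the non-unit condition in Definition~\ref{def 1} line up with Lemma~\ref{lemma}: if $I$ or $J$ were allowed to be all of $R$ the statement would be false (it would collapse to primeness). So the care lies in checking that $(a)$ and $(b)$ are proper exactly because $a,b$ are non-units, and conversely that every element of a proper ideal is a non-unit. Everything else is routine bookkeeping with the ideal/submodule operations.
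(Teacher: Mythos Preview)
Your proof is correct and follows essentially the same route as the paper: Lemma~\ref{lemma} for $(1)\Rightarrow(2)$ and the principal ideals $(a),(b)$ together with the cyclic submodule $\langle m\rangle$ for $(2)\Rightarrow(1)$. You are in fact a bit more careful than the paper in two places: you explicitly note that $IJ$ is generated by the products $ab$ (so $ab\in(N:M)$ for all such $a,b$ suffices), and you handle the degenerate case $Rm=M$, which the paper's proof silently skips even though statement~(2) is formulated only for proper $K$.
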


\begin{proof}
	Let $N$ be a proper submodule of $M.$
	
	$(1)\Rightarrow:(2):$ Choose any two proper ideals $I,J$ and a proper 
	submodule $K$ of $M$ such that $IJK\subseteq N.$ Suppose $IJ\nsubseteq(N:M).$
	Then there are non-units $a,b\in R$ such that $a\in I,$ $b\in J$ and $ab\in 
	IJ\backslash(N:M).$ This means that $abK\subseteq N.$ Thus we obtain that  $%
	K\subseteq N$ by Lemma \ref{lemma}.
	
	$(2)\Rightarrow:(1):$ Take non-unit elements $a,b\in R$ and $m\in M$ such
	that  $abm\in N.$ Assume $m\notin N.$ This means that $<m>\nsubseteq N.$
	Consider  $I=(a),J=(b)$ and $K=<m>$. Thus since $IJK\subseteq N$ and $%
	K\nsubseteq N,$ we  conclude $IJ\subseteq(N:M)$ by our hypothesis.
	Consequently, it means that  $ab\in(N:M),$ as desired.
\end{proof}

Note that if $R$ has exactly one maximal ideal, then $R$ is called a \textit{%
	quasilocal ring}.

\begin{theorem}
	\label{theoremQ}Let $M\ $be an $R$-module. If $N$ is a 1-absorbing prime submodule of $M$ that
	is not a prime submodule, then $R$ is a quasilocal ring.
\end{theorem}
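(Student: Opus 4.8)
The plan is to argue by contrapositive: assuming $R$ has at least two distinct maximal ideals, I will show that every $1$-absorbing prime submodule of $M$ is in fact prime. So let $N$ be a $1$-absorbing prime submodule of $M$, let $r \in R$ and $m \in M$ with $rm \in N$, and suppose $m \notin N$; the goal is to deduce $r \in (N:M)$, i.e.\ $rM \subseteq N$. If $r$ is a unit this is immediate, so assume $r$ is a non-unit, hence $r$ lies in some maximal ideal $\mathfrak{m}_1$. Pick a second maximal ideal $\mathfrak{m}_2 \neq \mathfrak{m}_1$.

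The key trick is to manufacture a second non-unit factor that multiplies $m$ into $N$ without changing the relevant product. Choose $t \in \mathfrak{m}_2 \setminus \mathfrak{m}_1$ (possible since $\mathfrak{m}_1 \not\supseteq \mathfrak{m}_2$); then $1 + t$ or, more carefully, an element of the form obtained from the comaximality of $\mathfrak{m}_1$ and $\mathfrak{m}_2$ should give me a non-unit $s$ with $s \equiv 1 \pmod{r\text{-behaviour we need}}$. Concretely: since $\mathfrak{m}_1 + \mathfrak{m}_2 = R$, write $1 = x + y$ with $x \in \mathfrak{m}_1$, $y \in \mathfrak{m}_2$. Then $y = 1 - x$ is a non-unit (it lies in $\mathfrak{m}_2$) and $x$ is a non-unit (it lies in $\mathfrak{m}_1$), while $x + y = 1$. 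Now consider $m$: I would like to split the hypothesis $rm \in N$ using these. Note $r m = r(x+y)m = (rx)m + (ry)m$. This does not obviously help directly, so instead apply the $1$-absorbing condition to the product $r \cdot y \cdot m$ versus $r \cdot x \cdot m$ after first arranging that one of $rxm, rym$ lies in $N$. Since $rm \in N$, both $rxm = x(rm) \in N$ and $rym = y(rm) \in N$. Now $r$ and $y$ are both non-units, so from $r y m \in N$ and $m \notin N$ the $1$-absorbing prime property gives $ry \in (N:M)$. Likewise $r$ and $x$ are non-units, so from $rxm \in N$ and $m \notin N$ we get $rx \in (N:M)$. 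Adding, $r = r(x+y) = rx + ry \in (N:M)$, which is exactly what we wanted.

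So the whole argument hinges on producing non-units $x, y$ with $x + y = 1$, which is precisely where the existence of two distinct maximal ideals is used (comaximality of $\mathfrak{m}_1, \mathfrak{m}_2$). The main obstacle — really the only subtlety — is handling the case $m \in N$ or $r$ a unit, which are trivial, and making sure $x$ and $y$ are genuinely non-units: $x \in \mathfrak{m}_1$ and $y \in \mathfrak{m}_2$ both sit inside proper ideals, hence are non-units, so this is fine. Therefore if $N$ fails to be prime, no such pair $x,y$ can exist, forcing $R$ to have a unique maximal ideal, i.e.\ $R$ is quasilocal. I would write this up as: suppose for contradiction $R$ is not quasilocal, pick distinct maximal ideals $\mathfrak{m}_1 \neq \mathfrak{m}_2$, run the above to show $N$ is prime, contradicting the hypothesis.
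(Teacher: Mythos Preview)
Your proof is correct. The argument by contrapositive is clean: two distinct maximal ideals give, via comaximality, non-units $x,y$ with $x+y=1$; then from $rm\in N$, $m\notin N$, and $r$ non-unit you get $rxm,\,rym\in N$, apply the $1$-absorbing prime condition twice to obtain $rx,\,ry\in(N:M)$, and add. No gaps.

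The paper's own proof runs in the opposite direction. It fixes a witness $(r,m)$ to the failure of primeness (so $rm\in N$, $r\notin(N:M)$, $m\notin N$) and shows that for \emph{every} non-unit $s$ one has $rs\in(N:M)$; from this it deduces that $s+u$ is a unit whenever $s$ is a non-unit and $u$ is a unit, and then invokes an external result (Theorem~2.4 of \cite{yassine}) to conclude that $R$ is quasilocal. Your approach and the paper's are really two sides of the same coin---both exploit that quasilocal is equivalent to ``the non-units are closed under addition''---but yours is more self-contained: you use comaximality directly to produce the decomposition $1=x+y$, whereas the paper establishes the ring-theoretic characterisation and then outsources the final step. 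Your version would drop into the paper with no external citation needed.
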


\begin{proof}
	Suppose that $N$ is a 1-absorbing prime submodule of $M$ that is not a prime
	submodule. Then there exist a non-unit $r\in R$ and $m\in M$ such that $%
	rm\in N$  but $r\notin(N:M)$ and $m\notin N.$ Choose a non-unit element $%
	s\in R.$ Hence  we have that $rsm\in N$ and $m\notin N.$ Since $N$ is
	1-absorbing prime,  $rs\in(N:M).$ Let us take a unit element $u\in R.$ We
	claim that $s+u$ is a  unit element of $R.$ To see this, assume $s+u$ is
	non-unit. Then $r(s+u)m\in  N$. As $N$ is 1-absorbing prime, $r(s+u)\in(N:M).
	$ This means that  $ru\in(N:M),$ i.e., $r\in(N:M),$ which is a
	contradiction. Thus for any non-unit element $s$ and unit element $u$ in $R,$
	we have $s+u$ is a unit element. Similar to the proof of Theorem 2.4 in \cite%
	{yassine}, we obtain $R$ is a quasilocal ring.
\end{proof}

\begin{corollary}
	\label{corQ}	Let $M\ $be an $R$-module and $R$ is not a quasilocal ring. Then a proper
	submodule $N$ of $M$ is a 1-absorbing prime submodule if and only if $N$ is a
	prime submodule of $M$.
\end{corollary}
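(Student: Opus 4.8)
The plan is to derive this corollary directly from Theorem~\ref{theoremQ} and Proposition~\ref{mainpro}, which together already contain all the needed content; the proof should be short. First I would prove the forward implication by contraposition: suppose $N$ is a $1$-absorbing prime submodule of $M$ but $N$ is not a prime submodule. Then Theorem~\ref{theoremQ} applies verbatim and forces $R$ to be a quasilocal ring, contradicting the standing hypothesis that $R$ is not quasilocal. Hence $N$ must be a prime submodule.

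For the converse, I would simply invoke Proposition~\ref{mainpro}: every prime submodule is a $1$-absorbing prime submodule, with no hypothesis on $R$ needed at all. Thus if $N$ is prime it is automatically $1$-absorbing prime. Combining the two directions gives the stated equivalence.

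I do not expect any genuine obstacle here, since the hard work was done in Theorem~\ref{theoremQ}; the only thing to be careful about is phrasing the forward direction cleanly as a contrapositive so that the appeal to Theorem~\ref{theoremQ} is logically transparent. One might also remark (optionally) that this exactly parallels Corollary~2.5 of \cite{yassine} in the ideal-theoretic setting, which could be worth a one-line comment but is not required for the proof.

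\begin{proof}
	Assume $R$ is not a quasilocal ring. If $N$ is a prime submodule of $M$, then $N$ is a $1$-absorbing prime submodule of $M$ by Proposition~\ref{mainpro}, and this holds without any condition on $R$. Conversely, suppose $N$ is a $1$-absorbing prime submodule of $M$. If $N$ were not a prime submodule of $M$, then $R$ would be a quasilocal ring by Theorem~\ref{theoremQ}, contradicting our assumption. Therefore $N$ is a prime submodule of $M$.
\end{proof}
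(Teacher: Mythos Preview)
Your proof is correct and follows exactly the approach implicit in the paper: the corollary is stated immediately after Theorem~\ref{theoremQ} without a separate proof, so the intended argument is precisely the one you wrote---one direction by Proposition~\ref{mainpro}, the other by contraposition via Theorem~\ref{theoremQ}.
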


\begin{proposition}
	\label{prounion}Let $\{N_{i}\}_{i\in\vartriangle}$ be a chain of 1-absorbing
	prime submodules of $R$-module $M.$ Then the followings hold:
	
	\begin{enumerate}
		\item $\underset{i\in\vartriangle}{\bigcap}N_{i}$ is a 1-absorbing prime
		submodule of $M.$
		
		\item If $M$ is a finitely generated $R$-module, then $\underset
		{i\in\vartriangle}{\bigcup}N_{i}$ is a 1-absorbing prime submodule of $M.$
	\end{enumerate}
\end{proposition}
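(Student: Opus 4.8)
The plan is to prove each part directly from the definition of a 1-absorbing prime submodule, using the chain hypothesis to reduce any finite collection of the $N_i$ to a single one.

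For part (1), write $N = \bigcap_{i \in \vartriangle} N_i$. First I would check that $N$ is proper: since each $N_i$ is proper and they form a chain, $N$ equals the smallest one if the chain has a least element, and in any case $N \subseteq N_{i_0}$ for any fixed $i_0$, so $N \neq M$. Then take non-unit elements $a, b \in R$ and $m \in M$ with $abm \in N$, and suppose $m \notin N$. Then $m \notin N_j$ for some $j \in \vartriangle$. Since $abm \in N \subseteq N_j$ and $N_j$ is 1-absorbing prime with $m \notin N_j$, we get $ab \in (N_j : M)$. The remaining task is to upgrade this to $ab \in (N:M)$, i.e. $abM \subseteq N_i$ for every $i$. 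Fix $i$; by the chain property either $N_i \subseteq N_j$ or $N_j \subseteq N_i$. In the first case I need a separate argument: if $N_i \subseteq N_j$, then still $m \notin N_i$ (since $m \notin N_j \supseteq N_i$ would need $N_i \subseteq N_j$... wait, $m\notin N_j$ and $N_i\subseteq N_j$ gives $m\notin N_i$), so applying the 1-absorbing property of $N_i$ to $abm \in N \subseteq N_i$ yields $ab \in (N_i : M)$ directly. In the second case $N_j \subseteq N_i$, so $abM \subseteq N_j \subseteq N_i$. Either way $abM \subseteq N_i$; since $i$ was arbitrary, $ab \in (N:M)$, as desired.

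For part (2), write $N = \bigcup_{i \in \vartriangle} N_i$. The first step is properness of $N$: this is where finite generation of $M$ is used. If $N = M$, then each generator of $M$ lies in some $N_i$, and by the chain property finitely many generators all lie in a single $N_{i_0}$, forcing $N_{i_0} = M$, contradicting that $N_{i_0}$ is proper. Next take non-units $a, b \in R$ and $m \in M$ with $abm \in N$ and suppose $m \notin N$. Then $abm \in N_k$ for some $k$, and $m \notin N_k$, so by the 1-absorbing prime property of $N_k$ we get $ab \in (N_k : M) \subseteq (N : M)$, and we are done. So this direction needs essentially no chain manipulation beyond the properness step.

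The main obstacle is the properness check in part (2) together with the case analysis in part (1): one must be careful that the element $m$ witnessing $m \notin N$ actually lies outside the correct $N_i$ when comparing via the chain, and that finite generation is genuinely invoked to rule out $\bigcup N_i = M$. I would also remark (or leave as an easy observation) that without finite generation part (2) can fail, since an ascending union of proper submodules need not be proper. Everything else is a routine application of Definition \ref{def 1}.
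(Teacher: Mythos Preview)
Your proposal is correct and follows essentially the same approach as the paper: in part~(1) you pick an index where $m$ fails to lie in $N_j$, deduce $ab\in(N_j:M)$, and then use the chain comparison in two cases to propagate $ab\in(N_i:M)$ to every $i$; in part~(2) you use finite generation exactly to ensure properness of the union and then apply the definition at a single $N_k$. The only differences are cosmetic (you add explicit properness checks that the paper omits or asserts, and your indices $i,j$ are swapped relative to the paper's), so nothing substantive separates the two arguments.
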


\begin{proof}
	Let $\{N_{i}\}_{i\in\vartriangle}$ be a chain of 1-absorbing prime
	submodules  of $M.$
	
	(1): Take non-unit elements $a,b\in R$ and $m\in M$ such that $abm\in  
	\underset{i\in\vartriangle}{\bigcap}N_{i}.$ Assume that $m\notin\underset{%
		i\in\vartriangle}{\bigcap}N_{i},$ so there exists $i\in\vartriangle$ such 
	that $m\notin N_{i}.$ Since $N_{i}$ is 1-absorbing prime, we conclude  $%
	ab\in(N_{i}:M).$ For any $j\in\vartriangle,$ we two cases. If $%
	N_{i}\subseteq  N_{j},$ then $(N_{i}:M)\subseteq(N_{j}:M),$ that is, $%
	ab\in(N_{j}:M).$ If  $N_{j}\subset N_{i},$ then we obtain that $%
	ab\in(N_{j}:M)$ since $m\notin  N_{j}$ and $N_{j}$ is 1-absorbing prime. As
	a consequence, for all cases we  have $ab\in(\underset{i\in\vartriangle}{%
		\bigcap}N_{i}:M).$
	
	(2): Since $M$ is finitely generated, $\underset{i\in\vartriangle}{\bigcup }%
	N_{i}$ is a proper submodule of $M.$ Choose non-unit elements $a,b\in R$ and
	$m\in M$ such that $abm\in\underset{i\in\vartriangle}{\bigcup}N_{i}$ and  $%
	m\notin\underset{i\in\vartriangle}{\bigcup}N_{i}.$ Thus for $i\in 
	\vartriangle,$ $abm\in N_{i}$ and $m\notin N_{i}.$ This gives us $ab\in 
	(N_{i}:M)\subseteq(\underset{i\in\vartriangle}{\bigcup}N_{i}:M).$ It is done.
\end{proof}

\begin{proposition}
	Let $f:M\rightarrow M^{\prime}$ be a homomorphism of $R$-module $M$ and
	$M^{\prime}$. Then the followings hold:
	
	\begin{enumerate}
		\item If $N^{\prime}$ is a 1-absorbing
		prime submodule of $M^{\prime}$ with $f^{-1}(N^{\prime})\neq M$, then
		$f^{-1}(N^{\prime})$ is a 1-absorbing prime submodule of $M$.
		
		\item Suppose that $f$ is an epimorphism. If $N$ is a 1-absorbing prime
		submodule of $M$ containing $Ker(f)$, then $f(N)$ is a 1-absorbing prime
		submodule of $M^{\prime}.$
	\end{enumerate}
\end{proposition}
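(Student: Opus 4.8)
The plan is to prove the two parts of the proposition by directly verifying the 1-absorbing prime condition, transporting the hypothesis across the homomorphism $f$ and using the fact that preimages and images of submodules are submodules together with basic identities relating $(\,\cdot:M)$-type residues.

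For part (1), I would start with non-unit elements $a,b\in R$ and $m\in M$ such that $abm\in f^{-1}(N')$. Applying $f$ gives $ab\,f(m)=f(abm)\in N'$ since $f$ is $R$-linear. Because $N'$ is a 1-absorbing prime submodule of $M'$ and $a,b$ are non-units, we get either $ab\in (N':M')$ or $f(m)\in N'$. In the second case $m\in f^{-1}(N')$ and we are done. In the first case I claim $ab\in (f^{-1}(N'):M)$: indeed, for any $x\in M$ we have $f(abx)=ab\,f(x)\in (N':M')\,M'\subseteq N'$ wait — more carefully, $ab f(x)\in N'$ because $ab\in(N':M')$ means $abM'\subseteq N'$; hence $abx\in f^{-1}(N')$ for all $x\in M$, so $ab\in(f^{-1}(N'):M)$. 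This closes part (1). The only mild subtlety is that $f$ need not be surjective, but that is not needed here.

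For part (2), assume $f$ is an epimorphism and $N$ is a 1-absorbing prime submodule of $M$ with $\mathrm{Ker}(f)\subseteq N$. First I should note $f(N)$ is a proper submodule of $M'$: if $f(N)=M'$, then since $\mathrm{Ker}(f)\subseteq N$ one gets $N=f^{-1}(f(N))=f^{-1}(M')=M$, contradicting that $N$ is proper; here I use the standard fact that $f^{-1}(f(N))=N$ when $\mathrm{Ker}(f)\subseteq N$. Now take non-units $a,b\in R$ and $m'\in M'$ with $abm'\in f(N)$. By surjectivity write $m'=f(m)$ for some $m\in M$, so $f(abm)=abm'\in f(N)$, i.e.\ $abm\in f^{-1}(f(N))=N$. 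Since $N$ is 1-absorbing prime, either $ab\in(N:M)$ or $m\in N$. If $m\in N$ then $m'=f(m)\in f(N)$, done. If $ab\in(N:M)$, then $abM\subseteq N$, so $abM'=ab f(M)=f(abM)\subseteq f(N)$, giving $ab\in(f(N):M')$, as desired.

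I do not anticipate a serious obstacle; the argument is a routine diagram chase. The one place that requires care is the properness of $f(N)$ in part (2) and the use of the identity $f^{-1}(f(N))=N$, which is exactly where the hypothesis $\mathrm{Ker}(f)\subseteq N$ is consumed — omitting it would break both the properness claim and the pull-back step $abm\in N$. In part (1) the corresponding properness hypothesis $f^{-1}(N')\neq M$ is simply assumed, so nothing needs to be checked there.
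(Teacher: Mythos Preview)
Your argument is correct and follows essentially the same route as the paper's proof: in both parts you push the data across $f$, apply the 1-absorbing prime hypothesis on the other side, and translate the resulting dichotomy back using $abM'\subseteq N' \Rightarrow abM\subseteq f^{-1}(N')$ in (1) and $f^{-1}(f(N))=N$ (the paper uses only $\subseteq$) in (2). Your write-up is in fact slightly more careful than the paper's, since you explicitly justify the residue containment in (1) and the properness of $f(N)$ in (2), which the paper leaves implicit.
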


\begin{proof}
	Let $f:M\rightarrow M^{\prime}$ be a homomorphism of $R$-module $M$ and  $%
	M^{\prime}$.
	
	(1): Let choose non-units $a,b\in R$ and $m\in M$ such that $abm\in 
	f^{-1}(N^{\prime}).$ This means that $abf(m)=f(abm)\in N^{\prime}.$ Since  $%
	N^{\prime}$ is a 1-absorbing prime submodule of $M^{\prime},$ we have  $%
	ab\in(N^{\prime}:M^{\prime})$ or $f(m)\in N^{\prime}.$ Then we have $%
	ab\in(f^{-1}(N^{\prime}):M)$ or $m\in f^{-1}(N^{\prime }).$
	
	(2): Take non-units $a,b\in R$ and $m^{\prime}\in M^{\prime}$ such that  $%
	abm^{\prime}\in f(N).$ By assumption there exists $m\in M$ such that  $%
	m^{\prime}=f(m)$ and so $f(abm)\in f(N).$ Then $abm\in f^{-1}(f(N))\subseteq
	N$, as $Ker(f)\subseteq N.$ This implies that either $ab\in(N:M)$ or $m\in N.
	$  If $ab\in(N:M)$, then $abM\subseteq N,$ that is, $abf(M)=abM^{\prime}%
	\subseteq  f(N).$ Thus $ab\in(f(N):M^{\prime})$, it is done. If $m\in N,$
	then  $m^{\prime}=f(m)\in f(N),$ as required.
\end{proof}

By previous Proposition, one can easily obtain the following result:

\begin{corollary}
	\label{corbolum}Let $M$ be an $R$-module and $K\subset N$ be submodules of
	$M$. If $N$ is a 1-absorbing prime submodule of $M$, then $N/K$ is a
	1-absorbing prime submodule of $M/K$.
\end{corollary}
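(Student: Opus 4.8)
The plan is to derive Corollary \ref{corbolum} as a direct application of the preceding Proposition, part (2). First I would observe that the canonical projection $\pi: M \rightarrow M/K$ is an $R$-module epimorphism with $\mathrm{Ker}(\pi) = K$. Since we are given $K \subset N$, the kernel of $\pi$ is contained in the $1$-absorbing prime submodule $N$, so the hypotheses of part (2) of the Proposition are satisfied verbatim.

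Next I would compute $\pi(N)$ explicitly: $\pi(N) = \{\, n + K : n \in N \,\} = N/K$, which is indeed a submodule of $M/K$. It is proper because $N \neq M$ forces $N/K \neq M/K$ (if $N/K = M/K$ then for every $x \in M$ we would have $x + K = n + K$ for some $n \in N$, hence $x - n \in K \subseteq N$, giving $x \in N$, a contradiction). Applying part (2) of the Proposition with $f = \pi$, $M' = M/K$, and $N' = N/K$, we conclude immediately that $N/K = f(N)$ is a $1$-absorbing prime submodule of $M/K$.

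Since the entire argument is a substitution into an already-proved statement, there is essentially no obstacle here; the only minor care needed is to note explicitly that $N/K$ is a proper submodule of $M/K$, which follows from $N$ being proper in $M$ together with $K \subseteq N$. Thus the proof is a short two-line verification of the applicability of the Proposition rather than a fresh argument.

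\begin{proof}
	Consider the canonical projection $\pi:M\rightarrow M/K$, which is an $R$-module epimorphism with $\mathrm{Ker}(\pi)=K$. Since $K\subset N$, we have $\mathrm{Ker}(\pi)\subseteq N$, and $N$ is a $1$-absorbing prime submodule of $M$ by hypothesis. Moreover $\pi(N)=N/K$ is a proper submodule of $M/K$, because $N\neq M$ and $K\subseteq N$ together force $N/K\neq M/K$. Hence, by part (2) of the previous Proposition applied to $f=\pi$, the submodule $f(N)=N/K$ is a $1$-absorbing prime submodule of $M/K$.
\end{proof}
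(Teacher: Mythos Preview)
Your proof is correct and follows exactly the approach the paper intends: the paper states that the corollary is obtained directly from the preceding Proposition, and you apply part (2) of that Proposition to the canonical epimorphism $\pi:M\to M/K$, noting $\mathrm{Ker}(\pi)=K\subseteq N$ and $\pi(N)=N/K$. Your explicit verification that $N/K$ is proper in $M/K$ is a welcome detail that the paper omits.
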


\begin{definition}
	Let $M\ $be an $R$-module and $N$ be a proper submodule of $M.$ Let $P$ be a
	1-absorbing prime submodule of $M$ such that $N\subseteq P.$ If there is not
	exist a 1-absorbing prime submodule $P^{\prime}$ such that $N\subseteq
	P^{\prime}\subset P,$ then $P$ is called a \textbf{minimal 1-absorbing prime
		submodule of }$N.$
\end{definition}

\begin{theorem}
	\label{zorn}Let $M\ $be an $R$-module and $N$ be a proper submodule of $M.$ If
	$P$ is a 1-absorbing prime submodule of $M$ such that $N\subseteq P,$ then
	there exists a minimal 1-absorbing prime submodule of $N$ that it is contained
	in $P.$
\end{theorem}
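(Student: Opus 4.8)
The plan is to apply Zorn's lemma to the collection
\[
\Sigma=\{\,Q : Q\text{ is a 1-absorbing prime submodule of }M\text{ with }N\subseteq Q\subseteq P\,\},
\]
ordered by \emph{reverse} inclusion, and produce a minimal element of $\Sigma$, which by definition will be a minimal 1-absorbing prime submodule of $N$ contained in $P$. First I would observe that $\Sigma\neq\varnothing$ since $P\in\Sigma$. Then I would take an arbitrary chain $\{Q_i\}_{i\in\vartriangle}$ in $\Sigma$ (a chain with respect to reverse inclusion is just a chain with respect to inclusion) and show it has an upper bound in $(\Sigma,\supseteq)$, i.e.\ a lower bound with respect to inclusion, namely $Q=\bigcap_{i\in\vartriangle}Q_i$. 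One checks that $N\subseteq Q\subseteq P$ is immediate, and that $Q\in\Sigma$ follows from part~(1) of Proposition~\ref{prounion}, which states precisely that the intersection of a chain of 1-absorbing prime submodules is again a 1-absorbing prime submodule. Hence every chain in $(\Sigma,\supseteq)$ has an upper bound, so Zorn's lemma yields a maximal element of $(\Sigma,\supseteq)$, that is, a submodule $P_0\in\Sigma$ minimal with respect to inclusion.

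Next I would verify that $P_0$ is genuinely a \emph{minimal 1-absorbing prime submodule of $N$} in the sense of the preceding definition, not merely minimal inside the truncated family $\Sigma$. Suppose $P'$ is a 1-absorbing prime submodule of $M$ with $N\subseteq P'\subset P_0$. Since $P_0\subseteq P$, we get $N\subseteq P'\subseteq P$, so $P'\in\Sigma$ and $P'\subsetneq P_0$, contradicting the minimality of $P_0$ in $(\Sigma,\supseteq)$. Therefore no such $P'$ exists, and $P_0$ is a minimal 1-absorbing prime submodule of $N$; moreover $P_0\subseteq P$ by construction, which is the assertion of the theorem.

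The only genuine subtlety is the set-theoretic bookkeeping around Zorn's lemma: one must be careful that a chain in the reverse-inclusion order is handled correctly (upper bounds there correspond to intersections, not unions) and that the empty chain is covered (its upper bound is $P$ itself, or equivalently one just notes $\Sigma$ is nonempty). The module-theoretic content is entirely packaged in Proposition~\ref{prounion}(1), so I do not anticipate any difficulty there; I would simply cite it for the closure of $\Sigma$ under taking intersections of chains. Thus the main obstacle is essentially cosmetic—stating the order correctly—and the proof is short once $\Sigma$ and its order are set up properly.
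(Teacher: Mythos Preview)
Your proposal is correct and follows essentially the same approach as the paper: define the family of 1-absorbing prime submodules between $N$ and $P$, order it by reverse inclusion, use Proposition~\ref{prounion}(1) to show intersections of chains stay in the family, apply Zorn's Lemma, and then check that the resulting element is minimal in the sense of the definition. Your write-up is in fact a bit more careful about the set-theoretic bookkeeping than the paper's own proof.
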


\begin{proof}
	Let define $\Lambda:=\{P_{i}\in S(M):$ $P_{i}$ is a 1-absorbing prime 
	submodule of $M$ such that $N\subseteq P_{i}\subseteq P\}.$ Since $%
	N\subseteq  P,$ we get $\Lambda\neq\emptyset.$ Consider $(\Lambda,\supseteq).
	$ Let us take a chain $\{N_{i}\}_{i\in\vartriangle}$ in $\Lambda.$ By 
	Proposition \ref{prounion}(1), since $\underset{i\in\vartriangle}{\bigcap }%
	N_{i}$ is a 1-absorbing prime, we can use Zorn's Lemma. Thus there exists a 
	maximal element $K\in\Lambda.$ Then $A$ is 1-absorbing prime and $N\subseteq
	K\subseteq P.$ Now we will show that $K$ is a minimal 1-absorbing prime 
	submodule of $N.$ For the contrary, assume that there exists a 1-absorbing 
	prime submodule $K^{\prime}$ such that $N\subseteq K^{\prime}\subseteq K.$ 
	Then $K^{\prime}\in\Lambda$ and $K\leq K^{\prime}.$ This implies $K= K^{\prime}.$ Consequently, $K$ is a minimal 
	1-absorbing prime submodule of $N.$
\end{proof}

\begin{corollary}
	\label{cor3item}Let $M\ $be an $R$-module and $N$ be a proper submodule of
	$M.$
	
	\begin{enumerate}
		\item Every 1-absorbing prime submodule of $M$ contains at least one minimal
		1-absorbing prime submodule of $M.$
		
		\item If $M$ is finitely generated, then every proper submodule of $M$ has at
		least one minimal 1-absorbing prime submodule of $M.$
		
		\item If $M$ is finitely generated, then there exists a 1-absorbing prime
		submodule of $M$ such that contains $N.$
	\end{enumerate}
\end{corollary}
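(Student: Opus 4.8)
The plan is to derive all three items from Theorem~\ref{zorn} together with one standard fact about finitely generated modules, namely that every proper submodule of a finitely generated module sits inside a maximal submodule, and that every maximal submodule is prime.

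I would begin with item (3), which is the substantive one. The key point is that a maximal submodule $L$ of $M$ is always a prime submodule: $M/L$ is then a simple $R$-module, and since $R$ is commutative we have $M/L\cong R/\mathfrak{m}$ for a maximal ideal $\mathfrak{m}$; hence for any non-zero $\overline{m}\in M/L$ we get $Ann(\overline m)=\mathfrak m=Ann(M/L)$, so $rm\in L$ with $m\notin L$ forces $r\in\mathfrak m$ and thus $rM\subseteq L$. By Proposition~\ref{mainpro}, $L$ is then a 1-absorbing prime submodule of $M$. To finish (3), note that when $M$ is finitely generated and $N\subsetneq M$, the poset of proper submodules of $M$ containing $N$, ordered by inclusion, satisfies the hypothesis of Zorn's Lemma: the union of a chain of proper submodules of a finitely generated module is again proper, because each of the finitely many generators would otherwise lie in some member of the chain, and a sufficiently large member of the chain would then contain all of them, contradicting properness. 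A maximal element $L$ of this poset is a maximal submodule of $M$ containing $N$, and by the previous remark $L$ is a 1-absorbing prime submodule of $M$ with $N\subseteq L$.

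Item (2) is then immediate: given $N\subsetneq M$ with $M$ finitely generated, item (3) produces a 1-absorbing prime submodule $P$ of $M$ with $N\subseteq P$, and Theorem~\ref{zorn} yields a minimal 1-absorbing prime submodule of $N$ contained in $P$. For item (1), let $P$ be an arbitrary 1-absorbing prime submodule of $M$; since $M\neq 0$ the zero submodule $0_{M}$ is proper and $0_{M}\subseteq P$, so Theorem~\ref{zorn} applied with $N=0_{M}$ gives a minimal 1-absorbing prime submodule (of $0_{M}$, i.e.\ a minimal 1-absorbing prime submodule of $M$) contained in $P$.

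The only place requiring genuine care is item (3): specifically the commutative-ring argument that a maximal submodule is prime, and the finite-generation argument that an increasing chain of proper submodules has proper union. Neither is hard, so I expect no real obstacle; items (1) and (2) are essentially just invocations of Theorem~\ref{zorn}.
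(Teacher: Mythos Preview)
Your argument is correct and follows essentially the same strategy as the paper: all three items are reduced to Theorem~\ref{zorn} together with the existence of a prime (hence 1-absorbing prime) submodule above any proper submodule of a finitely generated module. The only differences are cosmetic: the paper orders the deductions as $(1)\Rightarrow(2)\Rightarrow(3)$ and cites \cite{MM} for the existence of such a prime submodule, whereas you run $(3)\Rightarrow(2)$ and prove that existence directly via maximal submodules.
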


\begin{proof}
	Let $N$ be a proper submodule of $M.$
	
	(1): Obvious by Theorem \ref{zorn}.
	
	(2): Let $M\ $be a finitely generated $R$-module and $N$ be a proper
	submodule  of $M.$ Then there exists a prime submodule $P$ such that $%
	N\subseteq P,$ see  \cite{MM}. As every prime is 1-absorbing prime, $P$ is a
	1-absorbing prime  submodule. Thus, by the item (1), it is done.
	
	(3): By the item (2), it is clear.
\end{proof}

\begin{definition}
	For any $I\in Id(R),$ we define
	\[
	\Omega:=\{I_{i}\in Id(R):\ I_{i}\text{ is a 1-absorbing prime ideal such that
	}I\subseteq I_{i}\}.
	\]
	Then the intersection of all elements in $\Omega$ is called $radical_{1}%
	$\textbf{\ }of\textbf{\ }$I,$ and we denote it as
	\[
	rad_{1}(I):=\underset{I_{i}\in\Omega}{\bigcap}I_{i}\text{ and if }%
	\Omega=\emptyset\text{ or }I=R,\text{ we define }rad_{1}(I):=R.\text{ }%
	\]
	
\end{definition}

\begin{remark}
	It is clear that $rad_{1}(I)\subseteq\sqrt{I}=rad(I),$ since every prime ideal
	is a 1-absorbing prime ideal.
\end{remark}

\begin{definition}
	For any $N\in S(M),$ we define
	\[
	\Omega:=\{P_{i}\in S(M):\ P_{i}\text{ is a 1-absorbing prime submodule such
		that }N\subseteq P_{i}\}.
	\]
	Then the intersection of all elements in $\Omega$ is called $radical_{1}$ of
	$N,$ and we denote it as
	\[
	rad_{1}(N):=\underset{P_{i}\in\Omega}{\bigcap}P_{i}\text{ and if }%
	\Omega=\emptyset\text{ or }N=M,\text{ we define }rad_{1}(N):=M.\text{ }%
	\]
	
\end{definition}

\begin{remark}
	It is clear that $rad_{1}(N)\subseteq rad(N),$ since every prime submodule is
	a 1-absorbing prime submodule.
\end{remark}

\begin{proposition}
	Let $M\ $be an $R$-module and $N$, $L$ be two submodules of $M.$ The following
	statements are hold:
\end{proposition}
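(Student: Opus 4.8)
The final Proposition is stated for two submodules $N,L$ of $M$, so the natural assertions are the basic functorial properties of the operator $rad_{1}(-)$: the containment $N\subseteq rad_{1}(N)$, monotonicity ($N\subseteq L\Rightarrow rad_{1}(N)\subseteq rad_{1}(L)$), the intersection inclusion $rad_{1}(N\cap L)\subseteq rad_{1}(N)\cap rad_{1}(L)$, and idempotence $rad_{1}(rad_{1}(N))=rad_{1}(N)$. The plan is to read each of these off directly from the definition of $rad_{1}(-)$ as a (possibly empty) intersection of $1$-absorbing prime submodules. Throughout, I would write $\Omega_{N}:=\{P_{i}\in S(M):P_{i}\text{ is a }1\text{-absorbing prime submodule of }M\text{ with }N\subseteq P_{i}\}$, so that $rad_{1}(N)=\bigcap_{P_{i}\in\Omega_{N}}P_{i}$ when $\Omega_{N}\neq\emptyset$ and $N\neq M$, and $rad_{1}(N)=M$ otherwise.

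First I would dispose of the containment $N\subseteq rad_{1}(N)$: every $P_{i}\in\Omega_{N}$ contains $N$ by definition, hence so does their intersection, and in the degenerate case $rad_{1}(N)=M\supseteq N$ is trivial. Next, for monotonicity, the key observation is $\Omega_{L}\subseteq\Omega_{N}$ whenever $N\subseteq L$, since any $1$-absorbing prime submodule containing $L$ also contains $N$; intersecting over the smaller index set yields a larger submodule, so $rad_{1}(N)=\bigcap_{\Omega_{N}}P_{i}\subseteq\bigcap_{\Omega_{L}}P_{i}=rad_{1}(L)$, and the inclusion is immediate when $\Omega_{L}=\emptyset$ or $L=M$. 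The intersection inclusion $rad_{1}(N\cap L)\subseteq rad_{1}(N)\cap rad_{1}(L)$ is then automatic by applying monotonicity to $N\cap L\subseteq N$ and $N\cap L\subseteq L$; I would remark that equality need not hold, in parallel with the behaviour of the classical radical $rad(-)$.

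For idempotence, one inclusion is the already established $rad_{1}(N)\subseteq rad_{1}(rad_{1}(N))$. For the reverse, since $rad_{1}(N)\subseteq P_{i}$ for every $P_{i}\in\Omega_{N}$, we have $\Omega_{N}\subseteq\Omega_{rad_{1}(N)}$, whence $rad_{1}(rad_{1}(N))=\bigcap_{\Omega_{rad_{1}(N)}}P_{i}\subseteq\bigcap_{\Omega_{N}}P_{i}=rad_{1}(N)$; if $\Omega_{N}=\emptyset$ then $rad_{1}(N)=M$ and $rad_{1}(M)=M$ by convention, so equality still holds. I do not expect a genuine obstacle here: the only point demanding care is the systematic bookkeeping of the two exceptional conventions (namely $\Omega=\emptyset$ and the argument equal to $M$) in every item, so that each claimed inclusion is checked against those edge cases rather than only in the "generic" situation where $\Omega_{N}$ is nonempty and the submodule is proper.
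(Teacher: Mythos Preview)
Your guesses cover the paper's items (1)--(3) correctly (the paper actually states only the inclusion $rad_{1}(rad_{1}(N))\subseteq rad_{1}(N)$, not full idempotence, but your argument handles that direction), and the paper dismisses all of these as ``elementary'' without further comment. However, you have missed the two items the paper actually cares about: item~(4), $rad_{1}(IM)\subseteq rad_{1}(\sqrt{I}M)$ for an ideal $I$, and item~(5), $rad_{1}(N:M)\subseteq (rad_{1}(N):M)$. Item~(4) is again immediate from the monotonicity you proved, since $I\subseteq\sqrt{I}$ gives $IM\subseteq\sqrt{I}M$. Item~(5) is the only part the paper proves in detail, and it is not a pure formality: the point is that if $K$ is any $1$-absorbing prime submodule containing $N$, then $(K:M)$ is a $1$-absorbing prime \emph{ideal} containing $(N:M)$ (by Proposition~\ref{pro1}(1)), so $rad_{1}(N:M)\subseteq (K:M)$, i.e.\ $rad_{1}(N:M)M\subseteq K$; intersecting over all such $K$ yields $rad_{1}(N:M)M\subseteq rad_{1}(N)$, which is the claim.

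So the genuine gap is not in what you wrote but in what you omitted: your proposal never engages with the one non-trivial item, whose proof requires the earlier result that $(K:M)$ inherits the $1$-absorbing prime property from $K$.
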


\begin{enumerate}
	\item $N\subseteq rad_{1}(N),$
	
	\item $rad_{1}(rad_{1}(N))\subseteq rad_{1}(N),$
	
	\item $rad_{1}(N\cap L)\subseteq rad_{1}(N)\cap rad_{1}(L),$
	
	\item $rad_{1}(IM)\subseteq rad_{1}(\sqrt{I}M).$
	
	\item $rad_{1}(N:M)\subseteq(rad_{1}(N):M).$
\end{enumerate}

\begin{proof}
	The first four items are elementary.
	
	(5): If $rad_{1}(N)=M,$ it is trivial. Let $rad_{1}(N)\neq M.$ Then there is
	a  1-absorbing prime submodule $K$ of $M$ such that $N\subseteq K.$ Thus $%
	(K:M)$  is a 1-absorbing prime ideal with $(N:M)$ $\subseteq$ $(K:M).$ This
	means that  $rad_{1}(N:M)\subseteq(K:M),$ that is, $rad_{1}(N:M)M\subseteq
	K. $ The  containment is held for all 1-absorbing prime submodule $K_{i}$ of 
	$M$ such  that $N\subseteq K_{i}.$ This implies that $rad_{1}(N:M)M\subseteq
	rad_{1}(N),$ i.e., $rad_{1}(N:M)\subseteq(rad_{1}(N):M).$
\end{proof}

\begin{proposition}
	Let $M\ $be a finitely generated $R$-module and $N$ be a submodule of $M.$
	Then $rad_{1}(N)=M$ if and only if $N=M.$
\end{proposition}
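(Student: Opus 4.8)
The plan is to handle the two implications separately, with essentially all the work already done by earlier results. The implication $N=M\Rightarrow rad_1(N)=M$ is immediate: it is exactly the convention fixed in the definition of $rad_1$, so there is nothing to prove there.

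For the converse I would argue by contraposition: assume $N$ is a proper submodule of $M$ and show $rad_1(N)\neq M$. Here is where finite generation enters. By Corollary \ref{cor3item}(3), since $M$ is finitely generated and $N$ is proper, there exists a $1$-absorbing prime submodule $P$ of $M$ with $N\subseteq P$. In particular the family
\[
\Omega=\{P_i\in S(M):\ P_i\text{ is a }1\text{-absorbing prime submodule with }N\subseteq P_i\}
\]
is nonempty, so by definition $rad_1(N)=\bigcap_{P_i\in\Omega}P_i$. Since $P\in\Omega$, we get $rad_1(N)\subseteq P$, and $P$ is a proper submodule of $M$ by the very definition of a $1$-absorbing prime submodule. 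Hence $rad_1(N)\subseteq P\subsetneq M$, so $rad_1(N)\neq M$, which is the contrapositive of the statement we want.

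There is no real obstacle here; the only point that needs a moment's care is ensuring $\Omega\neq\emptyset$, because otherwise the convention would force $rad_1(N)=M$ and the argument would collapse. This is precisely what the finite generation hypothesis buys us through Corollary \ref{cor3item}(3) (which in turn rests on the fact that every proper submodule of a finitely generated module lies in a prime submodule, and every prime submodule is $1$-absorbing prime). It is also worth noting that finite generation is genuinely used only in this direction; the forward implication holds unconditionally.
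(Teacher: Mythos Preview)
Your proof is correct and follows essentially the same route as the paper: both directions are handled as you describe, with the substantive implication resting on Corollary \ref{cor3item}(3) to produce a $1$-absorbing prime submodule containing $N$, after which $rad_1(N)$ is trapped inside a proper submodule. The only cosmetic difference is that the paper phrases this as a proof by contradiction (assume $rad_1(N)=M$ and $N\neq M$, then $M=rad_1(N)\subseteq N'$, contradiction) rather than your contrapositive, but the content is identical.
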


\begin{proof}
	Let $rad_{1}(N)=M.$ Suppose that $N\neq M.$ By Corollary \ref{cor3item}(3), 
	there exists a 1-absorbing prime submodule $N^{\prime}$ of $M$ such that  $%
	N\subseteq N^{\prime}.$ Hence we conclude that $rad_{1}(N)=M\subseteq$  $%
	N^{\prime},$ a contradiction. The other way is clear.
\end{proof}

\begin{theorem}
	Let $M\ $be a finitely generated $R$-module and $N$, $L$ be two submodules of
	$M.$ Then $N+L=M$ $\Leftrightarrow$ $rad_{1}(N)+rad_{1}(L)=M.$
\end{theorem}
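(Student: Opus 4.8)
The plan is to prove the two implications separately, reducing each to a statement about radicals that we can control. For the forward direction, assume $N+L=M$. The key observation is that $N \subseteq rad_1(N)$ and $L \subseteq rad_1(L)$ (item (1) of the preceding Proposition), so $M = N+L \subseteq rad_1(N)+rad_1(L) \subseteq M$, giving equality immediately. This direction needs no finite generation and is essentially a one-line argument.

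For the converse, assume $rad_1(N)+rad_1(L)=M$ and suppose toward a contradiction that $N+L \neq M$. Since $M$ is finitely generated and $N+L$ is a proper submodule, Corollary \ref{cor3item}(3) supplies a 1-absorbing prime submodule $P$ of $M$ with $N+L \subseteq P$; in particular $N \subseteq P$ and $L \subseteq P$. By the very definition of $rad_1$ as an intersection over all 1-absorbing prime submodules containing the given submodule, we then get $rad_1(N) \subseteq P$ and $rad_1(L) \subseteq P$. Hence $M = rad_1(N)+rad_1(L) \subseteq P \subsetneq M$, a contradiction. Therefore $N+L=M$.

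The main obstacle is simply making sure the hypotheses line up: the converse genuinely uses that $M$ is finitely generated, because the existence of a 1-absorbing prime submodule above an arbitrary proper submodule is only guaranteed in that case (via Corollary \ref{cor3item}(3), which itself rests on the existence of a prime submodule over any proper submodule of a finitely generated module). One should also double-check the degenerate cases where $rad_1(N)=M$ or $rad_1(L)=M$; but by the previous Proposition, for a finitely generated $M$ this forces $N=M$ or $L=M$ respectively, and then $N+L=M$ trivially, so these cases cause no trouble. No delicate computation is involved — the whole argument is a short chain of containments together with the right invocation of the earlier existence result.
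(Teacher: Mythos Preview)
Your proof is correct and follows essentially the same approach as the paper: the forward direction is the same one-line containment argument, and the converse proceeds by contradiction, finding a 1-absorbing prime submodule $P$ containing $N+L$ and deducing $rad_1(N)+rad_1(L)\subseteq P\subsetneq M$. The only cosmetic difference is that the paper invokes directly the existence of a \emph{maximal} submodule $K\supseteq N+L$ (citing \cite{MM}) and notes that $K$ is prime hence 1-absorbing prime, whereas you route through Corollary~\ref{cor3item}(3); since that corollary is itself proved via the same existence result, the two arguments are the same in substance.
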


\begin{proof}
	Let $M\ $be a finitely generated $R$-module.
	
	$\Rightarrow:$ Let $N+L=M$. We know that $N\subseteq rad_{1}(N)$ and  $%
	L\subseteq rad_{1}(L).$ Thus $M=N+L\subseteq rad_{1}(N)+rad_{1}(L),$ it is
	done.
	
	$\Leftarrow:$ \ Suppose that $N+L\neq M.$ Since $M\ $is finitely generated, 
	there is a maximal submodule $K$ of $M$ such that $N+L\subseteq K,$ see  
	\cite{MM}. Furthermore, $K$ is a prime submodule (so 1-absorbing prime 
	submodule). As $rad_{1}(N)\subseteq K$ and $rad_{1}(L)\subseteq K,$ we 
	conclude $rad_{1}(N)+rad_{1}(L)\subseteq K,$ that is $M\subseteq K.$ This 
	contradicts our hypothesis.
\end{proof}

\section{Properties of 1-Absorbing Prime Submodules of Multiplication Modules}

For the integrity of our study, we would like to give some information about
multiplication module. An $R$-module $M$ is called a \textit{multiplication
	module} if every submodule $N$ of $M$ has the form $IM$ for some ideal $I$
of $R$, see \cite{smith}. Note that, since $I\subseteq(N:M)$ then $%
N=IM\subseteq (N:M)M\subseteq N$. So, if $M$ is multiplication, $N=(N:M)M$,
for every submodule $N$ of $M.$ Afterwards, a multiplication $R$-module $M$
is characterized by a maximal ideal of $R.$ Let $Q$ be a maximal ideal of $R.
$ To give the characterization, let us define the submodule $T_{Q}(M)=\{m\in
M:$ there is a $q\in Q$ such that $(1-q)m=0_{M}\}$ of $M.$ If $T_{Q}(M)=M,$
we call $M$ is a $Q$\textit{-torsion module.} If there exist $q\in Q$ and $%
m\in M$ such that $(1-q)M\subseteq Rm,$ we say $M$ is a $Q$\textit{-cyclic
	module. }Then the authors proved that $M$ is a multiplication $R$-module $%
\Leftrightarrow$ for every maximal ideal $Q$ of $R,$ either $M$ is a $Q$%
-cyclic module or $M$ is a $Q$-torsion module, see Theorem 1.2 in \cite%
{smith}. For more information about multiplication modules, we refer \cite%
{Bar} and \cite{anderson} to the reader.

\begin{theorem}
	\label{main}Let $M$ be a faithful and multiplication $R$-module. Let $I$ be a
	1-absorbing prime ideal of $R$. Then $abm\in IM$ implies $ab\in I$ or $m\in IM$ for
	all non-units $a,b\in R$ and $m\in M.$
\end{theorem}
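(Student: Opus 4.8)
The plan is to mimic the classical localization-style argument for multiplication modules, using the structural dichotomy from Theorem 1.2 in \cite{smith}: for every maximal ideal $Q$ of $R$, either $M$ is $Q$-cyclic or $M$ is $Q$-torsion. So suppose $abm\in IM$ for non-units $a,b\in R$ and $m\in M$, and assume toward a contradiction that $ab\notin I$ and $m\notin IM$. First I would localize the containment $abm\in IM$ appropriately; the key obstruction to pushing the 1-absorbing prime hypothesis on $I$ down to a statement about $m$ is that $I$ is only $1$-absorbing \emph{prime} (not prime), so I cannot freely factor. The idea is instead to produce, for a suitably chosen maximal ideal $Q$, an element $s\notin Q$ (hence a unit modulo the relevant torsion) with $sabm\in IM$ in a strong enough sense to conclude $sab\in I$ or $sm\in IM$, and then remove $s$.

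The concrete steps: (1) Since $m\notin IM = (IM:M)M$ and $M$ is multiplication, pick a maximal ideal $Q$ containing the proper ideal $(IM :_R Rm)$ — more precisely, choose $Q$ so that $m\notin T_Q(M)$ fails to force $m\in IM$; one shows that the set of $r\in R$ with $rm\in IM$ is a proper ideal, and take $Q$ maximal over it. (2) In the $Q$-torsion case, $T_Q(M)=M$, so there is $q\in Q$ with $(1-q)m=0_M\in IM$; since $1-q$ is a unit or at least $1-q\notin Q$ handled via a unit adjustment, this quickly yields $m\in IM$, contradicting the choice of $Q$ — so this case cannot occur, or is vacuous. (3) In the $Q$-cyclic case, there exist $q\in Q$ and $m_0\in M$ with $(1-q)M\subseteq Rm_0$; then $(1-q)^2 abm\in (1-q)^2 IM \subseteq Rm_0$ and similarly $(1-q)^2 m = r m_0$ for some $r\in R$. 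Multiplying $abm\in IM$ by $(1-q)^2$ and writing everything inside $Rm_0$, one obtains $ab r m_0 \in (IM \cap Rm_0)$; using faithfulness and that $M=(IM:M)M$, transfer this to $ab r \in (IM:M)$ up to annihilators, which by faithfulness are controlled.

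The heart of the argument — and the step I expect to be the main obstacle — is converting the module membership into an ideal membership to which the $1$-absorbing prime property of $I$ applies. I would argue: from $(1-q)M\subseteq Rm_0$ we get a homomorphism flavor that lets us write $ab\cdot[(1-q)^2 m] \in IM$ as $ab\cdot r m_0$, and $(1-q)^2 m_0 \in Rm_0$ gives a coefficient relation; the goal is to land at $ab\, t \in I$ for some $t\notin Q$ with $t m \notin IM$ impossible, forcing $ab\in I$ after stripping the unit $t$ (legitimate since $t\notin Q$ means $t$ is a unit in $R_Q$, and one clears denominators using faithfulness/finite generation implicit in multiplication modules). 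Once $ab\,t\in I$ with $t$ a non-unit only if it lies in \emph{some} maximal ideal other than $Q$, one invokes that $R$ is quasilocal when $I$ fails to be prime (Theorem \ref{theoremQ}) — if $I$ is prime the conclusion $ab\in I$ or $m\in IM$ is immediate, and if $I$ is not prime then $R$ has the unique maximal ideal $Q$, so $t\notin Q$ forces $t$ a unit and $ab\in I$ directly. This case split on whether $I$ is prime is, I believe, the clean way around the factorization obstacle, and I would organize the write-up around it.
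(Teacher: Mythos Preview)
Your skeleton is the paper's: form $I'=\{r\in R:rm\in IM\}$, take a maximal ideal $Q\supseteq I'$, rule out the $Q$-torsion case exactly as in your step~(2), and in the $Q$-cyclic case push everything into $Rm'$ to extract an ideal relation. The paper makes the transfer step precise as follows: from $(1-q')M\subseteq Rm'$ one gets $(1-q')abm\in Im'$, so $(1-q')abm=a'm'$ with $a'\in I$; writing also $(1-q')m=sm'$ gives $abs-a'\in Ann(m')$, and faithfulness forces $(1-q')Ann(m')=0$, hence the exact ideal relation $abs(1-q')=a'(1-q')\in I$. From there the paper finishes by a direct case split on whether $s$ and $1-q'$ are units, applying the $1$-absorbing property once or twice. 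Your ``$abr\in(IM:M)$ up to annihilators'' gestures at this but is not the right target; the annihilator has to be killed, not carried along, and the conclusion lands in $I$, not $(IM:M)$.

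Your prime/non-prime case split is a legitimate alternative organization --- the prime case is the classical El-Bast--Smith lemma (not quite ``immediate,'' but citable), and in the non-prime case $R$ is quasilocal, so $1-q'$ is automatically a unit and $M=Rm'$ is cyclic. But your endgame has a gap: you do \emph{not} land at ``$ab\,t\in I$ with $t\notin Q$'' for a single strippable $t$. After $1-q'$ drops out as a unit you are left with $abs\in I$, and the factor $s$ (your $r$) may well be a non-unit in $Q$; you cannot strip it. One more application of the $1$-absorbing hypothesis is unavoidable: from $abs\in I$ with $a,b,s$ non-units, either $ab\in I$ or $s\in I$, and $s\in I$ gives $(1-q')m=sm'\in IM$, contradicting $1-q'\notin Q\supseteq I'$. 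So the quasilocal reduction does not bypass the factorization obstacle you flagged; it just reduces it to a single clean invocation of the defining property. (Minor: one power of $1-q$ suffices throughout; the squares in step~(3) are unnecessary.)
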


\begin{proof}
	Choose non-units $a,b\in R$ and $m\in M$ such that $abm\in IM.$ Suppose  $%
	ab\notin I.$ Let us define $I^{\prime}:=\{r\in R:rm\in IM\}.$ If $I^{\prime
	}=R,$ it is done. If $I^{\prime}\neq R,$ there is a maximal ideal $Q$ of  $R$%
	\ such that $I^{\prime}\subseteq Q.$ Now, we claim that $m\notin T_{Q}(M).$ 
	Indeed, if $m\in T_{Q}(M),$ there is an element $q\in Q$ such that  $%
	(1-q)m=0_{M}.$ This means that $1-q\in I^{\prime}\subseteq Q,$ a 
	contradiction. Thus $T_{Q}(M)\neq M.$ Since $M$ is a multiplication module,  
	$M$ is a $Q$-cyclic module, by Theorem 1.2 in \cite{smith}. Hence there is  $%
	q^{\prime}\in Q$ and $m^{\prime}\in M$ such that $(1-q^{\prime})M\subseteq 
	Rm^{\prime}.$ Then $(1-q^{\prime})m\in Rm^{\prime},$ there is $s\in R$ such
	that  $(1-q^{\prime})m=sm^{\prime}.$ Then $(1-q^{\prime})abm=sabm^{\prime}%
	\in IM$  and $(1-q^{\prime})abm\in Rm^{\prime}.$ Thus there are $%
	a^{\prime}\in I$ such  that $(1-q^{\prime})abm=a^{\prime}m^{\prime}.$ Since $%
	sabm^{\prime}=a^{\prime }m^{\prime},$ we obtain $abs-$ $a^{\prime}\in
	Ann(m^{\prime}).$ On the other  hand, $(1-q^{\prime})M\subseteq Rm^{\prime}$
	implies that $(1-q^{\prime })Ann(m^{\prime})M\subseteq
	RAnn(m^{\prime})m^{\prime}=0_{M},$ that is,  $(1-q^{\prime})Ann(m^{\prime})%
	\subseteq Ann(M).$ As $M$ is faithful,  $(1-q^{\prime})Ann(m^{\prime})=0_{R}.
	$ Then $(1-q^{\prime})(abs-$ $a^{\prime })=0_{R}.$ Thus we have $%
	abs(1-q^{\prime})=a^{\prime}(1-q^{\prime})\in I.$  Then $abs(1-q^{\prime})%
	\in I.$ Here we have 2 situations for $s\in R:$
	
	Case 1: Let $s$ be unit. Then we have $ab(1-q^{\prime})\in I.$ If  $%
	1-q^{\prime}$ is a unit element of $R,$ then $ab\in I.$ This contradicts our
	assumption $ab\notin I$. Let $1-q^{\prime}$ be non-unit. Since $I$ is a 
	1-absorbing prime, $ab\in I$ (again, it is not possible) or $1-q^{\prime}\in
	I.$ If $\ 1-q^{\prime}\in I,$ then we have $(1-q^{\prime})m\in IM,$ so  $%
	1-q^{\prime}\in I^{\prime}\subseteq Q,$ a contradiction.
	
	Case 2: Let $s$ be non-unit. Now, we have 2 possibilities for $1-q^{\prime}.$
	If $1-q^{\prime}$ is a unit element of $R,$ then $sab\in I.$ Since $I$ is a 
	1-absorbing prime, $ab\in I$ (again, it is not possible) or $s\in I.$  Then$%
	\ sm^{\prime}\in IM$. Since $sm^{\prime}=(1-q^{\prime})m,$ we have  $%
	(1-q^{\prime})m\in IM$. So $1-q^{\prime}\in I^{\prime}\subseteq Q,$ not 
	possible. If $1-q^{\prime}$ is non-unit, since $I$ is a 1-absorbing prime, 
	either $abs\in I$ or $1-q^{\prime}\in I.$ Again, since it is 1-absorbing 
	prime, we have $ab\in I$ or $s\in I$ or $1-q^{\prime}\in I.$ All 
	possibilities give us a contradiction because of the above explanations.
	
	As a consequence, $I^{\prime}=R,$ that is, $m\in IM.$
\end{proof}

\begin{corollary}
	\label{maincor}Let $M$ be a faithful and multiplication $R$-module. Let $I$ be
	a 1-absorbing prime of $R$. If $IM\neq M,$ then $IM$ is a 1-absorbing prime
	submodule of $M.$
\end{corollary}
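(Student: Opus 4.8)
The plan is to derive Corollary \ref{maincor} almost immediately from Theorem \ref{main}, which already does the real work. Suppose $M$ is a faithful multiplication $R$-module, $I$ is a $1$-absorbing prime ideal of $R$, and $IM \neq M$, so that $IM$ is a proper submodule of $M$. To show $IM$ is a $1$-absorbing prime submodule, I must verify Definition \ref{def 1}: for all non-units $a,b \in R$ and all $m \in M$, if $abm \in IM$, then either $ab \in (IM : M)$ or $m \in IM$.

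So first I would fix non-units $a,b \in R$ and $m \in M$ with $abm \in IM$. By Theorem \ref{main} applied verbatim to this data, we get $ab \in I$ or $m \in IM$. In the second case we are already done. In the first case, $ab \in I$, and since $IM \subseteq IM$ trivially gives $(ab)M \subseteq IM$ (because $ab \in I$ implies $abM \subseteq IM$), we conclude $ab \in (IM : M)$. Hence in either case the defining condition holds, so $IM$ is a $1$-absorbing prime submodule of $M$.

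The only genuinely substantive step is the inclusion $I \subseteq (IM : M)$, which is immediate from the definition of the residue ideal: for $r \in I$ we have $rM \subseteq IM$ by definition of the product $IM$, so $r \in (IM : M)$; in particular $ab \in I$ forces $ab \in (IM:M)$. Thus there is no real obstacle here — all the difficulty has been absorbed into Theorem \ref{main} (the delicate localization-at-a-maximal-ideal argument using the $Q$-cyclic/$Q$-torsion dichotomy and faithfulness). The corollary is purely a matter of unwinding definitions and citing the theorem, so I would present it in just a few lines, taking care only to note at the outset that the hypothesis $IM \neq M$ is exactly what guarantees $IM$ is a \emph{proper} submodule, which is required before one can even speak of it being $1$-absorbing prime.
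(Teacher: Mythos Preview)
Your proof is correct and matches the paper's own argument essentially line for line: both reduce immediately to Theorem \ref{main} and use the trivial inclusion $I\subseteq (IM:M)$. The only cosmetic difference is that the paper phrases it contrapositively (assume $ab\notin (IM:M)$, deduce $ab\notin I$, then invoke the theorem), whereas you invoke the theorem first and then split into cases.
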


\begin{proof}
	Take non-unit elements $a,b\in R$ and $m\in M$ such that $abm\in IM.$
	Suppose  $ab\notin(IM:M).$ Then $ab\notin I.$ By Theorem \ref{main}, it must
	be $m\in  IM.$ It is done.
\end{proof}

\begin{proposition}
	\label{bolum}Let $I,J$ be two ideals of $R$ such that $I\subseteq J.$ If $J$
	is a 1-absorbing prime ideal of $R,$ then $J/I$ is a 1-absorbing prime ideal
	of $R/I.$
\end{proposition}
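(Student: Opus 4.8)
The plan is to verify the $1$-absorbing prime property of $J/I$ directly from the definition, pulling non-units back to $R$. First I would take non-unit elements $\bar{a}, \bar{b}, \bar{c} \in R/I$ with $\bar{a}\bar{b}\bar{c} \in J/I$, and pick representatives $a, b, c \in R$. The immediate concern is that a non-unit coset $\bar{a}$ in $R/I$ need not have a non-unit representative $a$ in $R$; however, if $a$ were a unit in $R$, then $\bar{a}$ would be a unit in $R/I$, contrary to assumption, so in fact \emph{every} representative of a non-unit coset is a non-unit. This observation is the small point that makes the pullback work, and I expect it to be the only real subtlety.

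Next I would translate $\bar{a}\bar{b}\bar{c} \in J/I$ into $abc \in J$ (using $I \subseteq J$, so that $\overline{abc} \in J/I$ lifts to $abc + I \subseteq J$, hence $abc \in J$). Since $a, b, c$ are non-units in $R$ and $J$ is a $1$-absorbing prime ideal of $R$, we get either $ab \in J$ or $c \in J$. Passing back to $R/I$ via the quotient map, this yields $\bar{a}\bar{b} \in J/I$ or $\bar{c} \in J/I$, which is exactly what is required. I would also note at the outset that $J/I$ is a proper ideal of $R/I$ since $J$ is proper in $R$.

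This is entirely routine; there is no genuine obstacle beyond the non-unit representative remark above. The proof is essentially the module-theoretic analogue, specialized to the ring acting on itself, of the behavior of $1$-absorbing prime submodules under the quotient epimorphism $R \to R/I$ (compare Corollary \ref{corbolum}), and one could alternatively deduce it from the second part of the homomorphism proposition applied to $f : R \to R/I$ together with the remark following Definition \ref{def 1} that a $1$-absorbing prime ideal is a $1$-absorbing prime submodule of $R$ as a module over itself. I would, however, present the short direct argument for clarity and self-containedness.
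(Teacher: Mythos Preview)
Your proposal is correct and follows essentially the same direct argument as the paper: take non-unit cosets in $R/I$, observe that any representative must be a non-unit in $R$ (since units map to units under $R\to R/I$), lift the product into $J$, apply the $1$-absorbing prime hypothesis on $J$, and push back down. Your version is in fact slightly more careful than the paper's, since you explicitly note that $J/I$ is proper and that $I\subseteq J$ is what guarantees $abc\in J$.
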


\begin{proof}
	Let $I$ be a 1-absorbing prime ideal of $R.$ Choose two non-unit elements  $%
	a+I,b+I,c+I$ in $R/I$ such that $abc+I\in J/I.$ This implies that $abc\in J.$
	As $\{r+I:r\in U(R)\}\subseteq U(R/I),$ we obtain $a,b,c$ are non-units.
	Since  $J$ is 1-absorbing prime, either $ab\in J$ or $c\in J.$ This means $%
	ab+I\in  J/I$ or $c+I\in J/I.$
\end{proof}

\begin{definition}
	Let $I$ be an ideal of $R$. If the following equation holds, then we say $R$
	has a \textbf{good unit element property} \textbf{for }$I.$%
	\[
	U(R/I)=\{r+I:r\in U(R)\}
	\]
	
\end{definition}

\begin{remark}
	For the other way of Proposition \ref{bolum}, we need to the good unit element
	property for $I.$ See the Corollary 2.17 of \cite{yassine}: Let $J$ be an
	ideal of $R$ such that $I\subseteq J$ and $U(R/I)=\{r+I:r\in U(R)\}.$ Then $J$
	is a 1-absorbing prime ideal of $R$ $\Leftrightarrow$ $J/I$ is a 1-absorbing
	prime ideal of $R/I.$
\end{remark}

\begin{proposition}
	\label{bolumring} Let $M$ be an $R$-module and $R$ has a good unit element
	property for\textbf{\ }$Ann(M).$ If $N$ is a 1-absorbing prime submodule as
	$R/Ann(M)$-module $M,$ then $N$ is a 1-absorbing prime submodule of $R$-module
	$M.$
\end{proposition}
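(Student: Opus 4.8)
The plan is to unwind both notions through the quotient ring $\bar R := R/Ann(M)$ and use the fact that multiplication by a scalar $r\in R$ on $M$ depends only on the coset $\bar r = r + Ann(M)$, so that the $R$-submodules of $M$ and the $\bar R$-submodules of $M$ are literally the same sets, and for any submodule $N$ one has $(N :_R M) = $ preimage of $(N :_{\bar R} M)$ under $R \twoheadrightarrow \bar R$. First I would fix non-unit elements $a,b \in R$ and $m \in M$ with $abm \in N$, and show that $\bar a, \bar b$ are non-units in $\bar R$: this is exactly where the good unit element property for $Ann(M)$ enters, since it says $U(\bar R) = \{\bar r : r \in U(R)\}$, so if $\bar a$ were a unit of $\bar R$ then $a$ would be a unit of $R$, contrary to assumption. (Without this hypothesis $\bar a$ could be a unit even though $a$ is not, and the argument breaks — this is the one real obstacle, and it is precisely why the hypothesis is imposed.)

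Next, since $abm \in N$ is the same statement whether we read $ab$ as an element of $R$ or $\bar a \bar b$ as an element of $\bar R$ acting on $m$, and $N$ is a $1$-absorbing prime submodule of the $\bar R$-module $M$, applying Definition \ref{def 1} over $\bar R$ to the non-units $\bar a, \bar b$ and the element $m$ yields either $\bar a \bar b \in (N :_{\bar R} M)$ or $m \in N$. In the second case we are immediately done. In the first case, $\bar a \bar b \in (N :_{\bar R} M)$ means $\bar a \bar b \, M \subseteq N$, i.e. $abM = (ab)M \subseteq N$ (again multiplication by $ab$ and by $\bar a\bar b$ agree on $M$), which is exactly $ab \in (N :_R M)$.

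Finally I would note that $N$ being a proper submodule of the $\bar R$-module $M$ is the same as $N$ being a proper submodule of the $R$-module $M$, so the properness requirement in Definition \ref{def 1} transfers without comment. Collecting the two cases, for all non-units $a,b \in R$ and all $m \in M$ with $abm \in N$ we have $ab \in (N:_R M)$ or $m \in N$, which is precisely the statement that $N$ is a $1$-absorbing prime submodule of the $R$-module $M$. The only nontrivial ingredient is the passage from non-units of $R$ to non-units of $\bar R$, handled by the good unit element property; everything else is the routine identification of $R$-module and $\bar R$-module structure on $M$ via the surjection $R \to R/Ann(M)$.
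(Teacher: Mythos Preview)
Your proof is correct and follows essentially the same route as the paper's own argument: start with non-units $a,b\in R$ and $m\in M$ with $abm\in N$, use the good unit element property for $Ann(M)$ to conclude that $a+Ann(M)$ and $b+Ann(M)$ are non-units in $R/Ann(M)$, apply the $1$-absorbing prime hypothesis over $R/Ann(M)$, and then translate each of the two resulting alternatives back to $R$. Your write-up is more explicit than the paper's (you spell out the identification of $R$- and $\bar R$-module structures and the transfer of properness), but the logical skeleton is identical.
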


\begin{proof}
	Take non-unit elements $a,b\in R$ and $m\in M$ such that $abm\in N.$ We must
	show that either $ab\in(N:_{R}M)$ or $m\in N.$ Consider  $%
	(a+Ann(M))(b+Ann(M))m=abm+Ann(M)m\in N.$ Since $R$ has good unit element 
	property for\textbf{\ }$Ann(M),$ we say $a+Ann(M)$ and $b+Ann(M)$ are
	non-unit  elements of $R/Ann(M).$ As $N$ is a 1-absorbing prime submodule as
	$R/Ann(M)$-module, we obtain that either $ab+Ann(M)\in(N:_{R/Ann(M)}M)$ or  $%
	m\in N.$ If the second one holds, it is done. The first one implies that  $%
	abM\subseteq N,$ that is, $ab\in(N:_{R}M),$ as required.
\end{proof}

\begin{theorem}
	\label{char}Let $M$ be a multiplication $R$-module and $R$ has a good unit
	element property for\textbf{\ }$Ann(M).$ Then the followings are equivalent:
	
	\begin{enumerate}
		\item $N$ is a 1-absorbing prime submodule of $M.$
		
		\item $(N:M)$ is a 1-absorbing prime submodule of $R.$
		
		\item For a proper ideal $I$ of $R$ such that $Ann(M)\subseteq I$ and
		$I$ is a 1-absorbing prime ideal, then $N=IM.$
	\end{enumerate}
\end{theorem}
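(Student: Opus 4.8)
The plan is to prove the cycle $(1)\Rightarrow(2)\Rightarrow(3)\Rightarrow(1)$, leaning on the results already established.

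For $(1)\Rightarrow(2)$ there is essentially nothing to do: this is exactly Proposition \ref{pro1}(1), which says that if $N$ is a $1$-absorbing prime submodule of $M$ then $(N:M)$ is a $1$-absorbing prime ideal of $R$ (I read the word ``submodule'' in item (2) as a typo for ``ideal'').

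For $(2)\Rightarrow(3)$ I would invoke the defining feature of a multiplication module, namely $N=(N:M)M$. Set $I:=(N:M)$. Then $Ann(M)=(0_{M}:_{R}M)\subseteq(N:M)=I$, the ideal $I$ is $1$-absorbing prime by hypothesis, and $I$ is proper (otherwise $N=IM=M$, contradicting that $N$ is a proper submodule); so $I$ witnesses statement (3) with $N=IM$.

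The real content is $(3)\Rightarrow(1)$, and the idea is to reduce to the faithful case so that Corollary \ref{maincor} applies, and then descend back along the good unit element property. So suppose $I$ is a proper ideal of $R$ with $Ann(M)\subseteq I$, with $I$ a $1$-absorbing prime ideal, and $N=IM$. Put $\bar{R}:=R/Ann(M)$. Then: (i) $M$ is a faithful $\bar{R}$-module, since its $\bar{R}$-annihilator is $Ann(M)/Ann(M)=0$; (ii) $M$ is a multiplication $\bar{R}$-module, because any submodule $K\leq M$ has the form $JM$ for some $R$-ideal $J$ and hence $K=\bar{J}M$ with $\bar{J}=(J+Ann(M))/Ann(M)$; (iii) $\bar{I}:=I/Ann(M)$ is a proper $1$-absorbing prime ideal of $\bar{R}$ by Proposition \ref{bolum} (applied to the pair $Ann(M)\subseteq I$); and (iv) $\bar{I}M=IM=N\neq M$. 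By Corollary \ref{maincor}, applied to the faithful multiplication $\bar{R}$-module $M$ and the $1$-absorbing prime ideal $\bar{I}$ of $\bar{R}$, the submodule $N=\bar{I}M$ is a $1$-absorbing prime submodule of $M$ regarded as an $\bar{R}$-module. Finally, since $R$ has the good unit element property for $Ann(M)$, Proposition \ref{bolumring} transfers this conclusion down: $N$ is a $1$-absorbing prime submodule of the $R$-module $M$, which is statement (1).

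The step I expect to be the main obstacle is precisely this reduction in $(3)\Rightarrow(1)$: verifying that $M$ remains a multiplication module over $R/Ann(M)$ and that the quotient ideal $I/Ann(M)$ stays $1$-absorbing prime, i.e.\ assembling exactly the hypotheses of Corollary \ref{maincor} and Proposition \ref{bolumring} after passing to the quotient ring; once this bookkeeping is in place, the two cited results do all the work. It is also worth checking at the outset that $N$ is being treated as a proper submodule throughout, so that ``$N=IM$'' genuinely forces $IM\neq M$ and $I$ proper, which is what makes Corollary \ref{maincor} and step $(2)\Rightarrow(3)$ applicable.
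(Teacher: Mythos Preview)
Your proposal is correct and follows exactly the paper's argument: the cycle $(1)\Rightarrow(2)\Rightarrow(3)\Rightarrow(1)$ via Proposition~\ref{pro1}, the choice $I=(N:M)$, and then the passage to the faithful multiplication $R/Ann(M)$-module combined with Proposition~\ref{bolum}, Corollary~\ref{maincor}, and Proposition~\ref{bolumring}. Your write-up is in fact more explicit than the paper's (which cites \cite{smith} for the fact that $M$ is a faithful multiplication $R/Ann(M)$-module rather than verifying it directly), and your reading of ``submodule'' in (2) as ``ideal'' is correct.
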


\begin{proof}
	Let $M$ be a multiplication $R$-module.
	
	$(1)\Rightarrow(2):$ By Proposition \ref{pro1}.
	
	$(2)\Rightarrow(3):$ Consider $I=(N:M).$
	
	$(3)\Rightarrow(1):$ Since $M$ is a multiplication $R$-module, $M$ is a 
	faithful multiplication $R/Ann(M)$-module by page 759 of \cite{smith}. Also,
	as $I$ is a 1-absorbing prime ideal of $R$, $I/Ann(M)$ is a 1-absorbing
	prime  ideal of $R/Ann(M),$ see Proposition \ref{bolum}. Then $[I/Ann(M)]M$
	is a  1-absorbing prime submodule of $R/Ann(M)$-module $M,$ by Corollary  %
	\ref{maincor}. This means that $[I/Ann(M)]M$ is a 1-absorbing prime
	submodule  of $R$-module $M,$ by Proposition \ref{bolumring}. As $%
	[I/Ann(M)]M=N,$  consequently, $N$ is a 1-absorbing prime submodule of $R$%
	-module $M.$
\end{proof}

\end{document}